\title{Hopf algebras on planar trees and permutations}
\date{}
\author{
	Diego Arcis \\%\thanks{The author is part of the research group GEMA Res.180/2019 VRIP--UA from Universidad Aut\'onoma de Chile.} \\
	Facultad de Ciencias de la Salud\\
  	Universidad Aut\'onoma de Chile -- Sede Talca\\
  	5 Poniente 1670 -- 3460000 Talca, Chile \\
  	\texttt{diego.arcis@uautonoma.cl} \\
    \And
  	Sebasti\'an M\'arquez \\%\thanks{The author is part of the research group GEMA Res.180/2019 VRIP--UA from Universidad Aut\'onoma de Chile.} \\
	Facultad de Ingenier\'ia\\
  	Universidad Aut\'onoma de Chile -- Sede Talca\\
  	4 Norte 99 -- 3460000 Talca, Chile \\
  	\texttt{sebastian.marquez@uautonoma.cl} \\
}
\begin{document}

\newtheorem{thm}{Theorem}[section]
\newtheorem{crl}[thm]{Corollary}
\newtheorem{dfn}[thm]{Definition}
\newtheorem{lem}[thm]{Lemma}
\newtheorem{pro}[thm]{Proposition}

\theoremstyle{definition}
\newtheorem{rem}{Remark}[section]
\newtheorem{exm}[thm]{Example}

\newcommand\N{\mathbb{N}}

\newcommand\T{\mathcal{T}}
\newcommand\PP{\mathcal{P}}

\newcommand\Sym{\mathfrak{S}}

\newcommand\s{\text{s}}

\newcommand\id{\mathrm{id}}

\newcommand\ind{\mathrm{ind}}
\newcommand\ord{\mathrm{ord}}

\newcommand\Prim{\mathrm{Prim}}

\newcommand\field{K}
\newcommand\tprod{*}

\def\blue{\color{blue}}
\def\red{\color{red}}

\newcommand{\hash}[1]{\,\,\#_{#1}\,\,}

\maketitle

\newcommand\treeroot{
	%\rtree % code
	\includegraphics{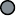} % pdf image
}

\newcommand\treeone{
	%\onetree{} % code
	\includegraphics{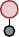} % pdf image
}

\newcommand\ktreeone{
	%\onetree{k} % code
	\includegraphics{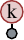} % pdf image
}

\newcommand\binarytreeone{
	%\onebinarytree % code
	\raisebox{-.25\height}{\includegraphics{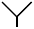}} % pdf image
}

\newcommand{\figureone}[1]{
	%\nthreexm{#1}{} % code
	\begin{array}{ccccc}\includegraphics{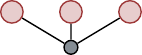}&\qquad\includegraphics{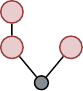}&\qquad\includegraphics{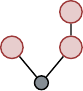}&\qquad\includegraphics{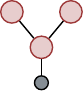}&\qquad\includegraphics{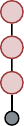}\end{array} % pdf images
}

\newcommand{\figuretwo}[1]{
	%\treexm{1.7cm/#1}{} % code
	\begin{array}{cccc}\includegraphics{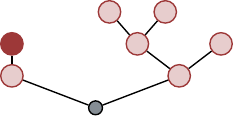}&\quad\includegraphics{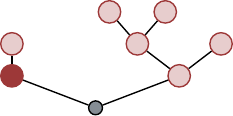}&\quad\includegraphics{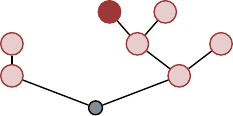}&\quad\includegraphics{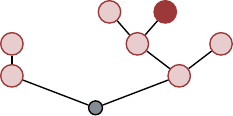}\\[5mm]\includegraphics{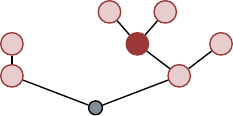}&\quad\includegraphics{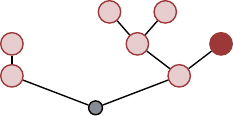}&\quad\includegraphics{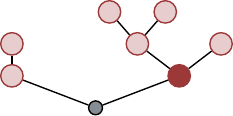}&\quad\includegraphics{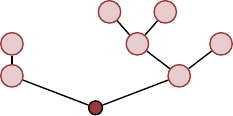}\end{array} % pdf image
}

\newcommand{\figurethr}[1]{
	%\treeres{1.4cm/#1}{0.7cm/#1}{} % code
	\includegraphics{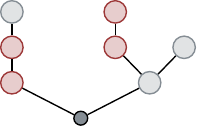}\includegraphics{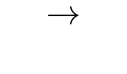}\includegraphics{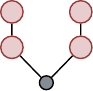} % pdf image
}

\newcommand{\figurefou}[1]{
	%\partitionexmone{#1}{} % code
	\begin{array}{ccccccc}\includegraphics{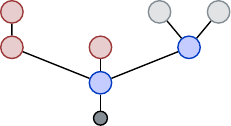}&\includegraphics{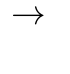}&\includegraphics{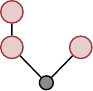}&,&\includegraphics{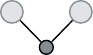}&,&\includegraphics{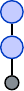}\end{array} % pdf image
}

\newcommand{\figurefiv}[1]{
	%\treecop{1.5cm/#1}{\Delta}{}{}{}{}{}{}
	\begin{array}{c}\Delta(\includegraphics{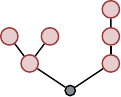})\quad=\quad\includegraphics{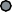}\otimes\includegraphics{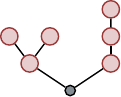}\quad+\quad\includegraphics{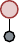}\otimes\includegraphics{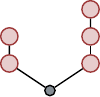}\quad+\quad\includegraphics{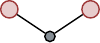}\otimes\includegraphics{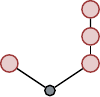}\\[0.3cm]+\quad\includegraphics{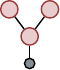}\otimes\includegraphics{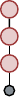}\quad+\quad\includegraphics{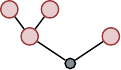}\otimes\includegraphics{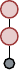}\quad+\quad\includegraphics{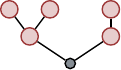}\otimes\includegraphics{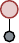}\quad+\quad\includegraphics{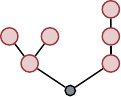}\otimes\includegraphics{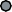}\,.\end{array} % pdf image
}

\newcommand{\figuresix}[1]{
	%\hashexmp{#1}{} % code
	\includegraphics{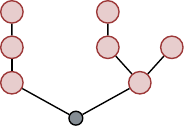}\includegraphics{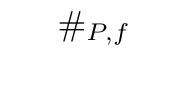}\includegraphics{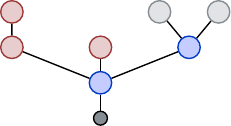}\includegraphics{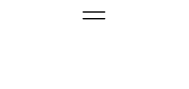}\includegraphics{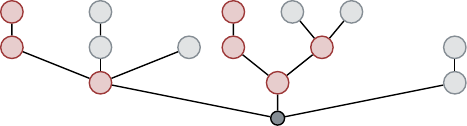} % pdf image
}

\newcommand{\figuresev}[1]{
	%\treepro{#1}{}{}{}{*} % code
	\begin{array}{c}\includegraphics{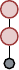}*\includegraphics{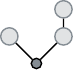}\quad=\quad\includegraphics{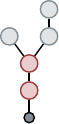}\quad+\quad\includegraphics{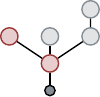}\quad+\quad\includegraphics{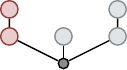}\quad+\quad\includegraphics{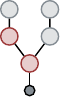}\quad+\quad\includegraphics{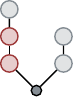}\\[2mm]\quad+\quad\includegraphics{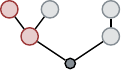}\quad+\quad\includegraphics{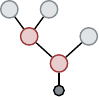}\quad+\quad\includegraphics{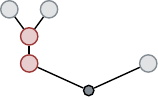}\quad+\quad\includegraphics{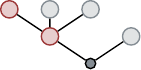}\quad+\quad\includegraphics{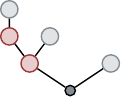}\end{array} % pdf image
}

\newcommand{\figureeig}[1]{
	%\treepro{#1}{a,b}{c,d,e}{c,d,e}{*} % code
	\begin{array}{c}\includegraphics{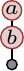}*\includegraphics{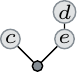}\quad=\quad\includegraphics{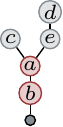}\quad+\quad\includegraphics{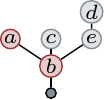}\quad+\quad\includegraphics{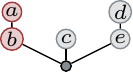}\quad+\quad\includegraphics{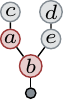}\quad+\quad\includegraphics{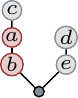}\\[2mm]\quad+\quad\includegraphics{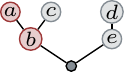}\quad+\quad\includegraphics{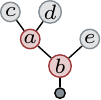}\quad+\quad\includegraphics{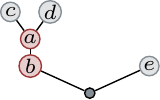}\quad+\quad\includegraphics{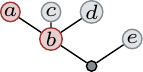}\quad+\quad\includegraphics{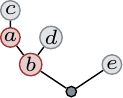}\end{array} % pdf image
}

\newcommand{\figurenin}[1]{
	%\treecop{1.5cm/#1}{\Delta}{a,b,a,c,b,a}{a,b,a,c,b,a}{a,b,a,c,b,a}{a,b,a,c,b,a}{a,b,a,c,b,a}{a,b,a,c,b,a} % code
	\begin{array}{c}\Delta(\includegraphics{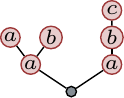})\quad=\quad\includegraphics{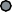}\otimes\includegraphics{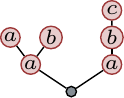}\quad+\quad\includegraphics{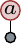}\otimes\includegraphics{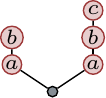}\quad+\quad\includegraphics{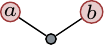}\otimes\includegraphics{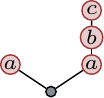}\\[0.3cm]+\quad\includegraphics{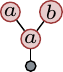}\otimes\includegraphics{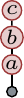}\quad+\quad\includegraphics{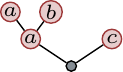}\otimes\includegraphics{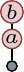}\quad+\quad\includegraphics{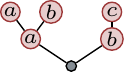}\otimes\includegraphics{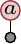}\quad+\quad\includegraphics{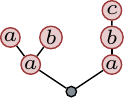}\otimes\includegraphics{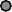}\,.\end{array} % pdf image
}

\newcommand{\figureten}[1]{
	%\treeshift{1.4cm/#1}{"5,2,1,4,3,6"}\mathsymbol{(0,0.5)}{\,[4]\quad=\quad}\treeshift{1.4cm/#1}{"9,6,5,8,7,10"} % code
	\includegraphics{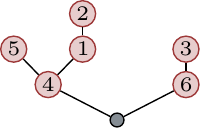}\includegraphics{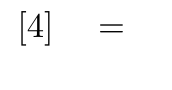}\includegraphics{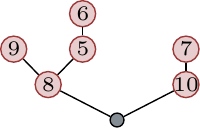} % pdf image
}

\newcommand{\figureele}[1]{
	%\treepro{#1}{"1,2"}{"5,3,4"}{"3,1,2"}{\star} % code
	\begin{array}{c}\includegraphics{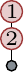}\star\includegraphics{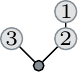}\quad=\quad\includegraphics{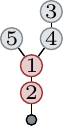}\quad+\quad\includegraphics{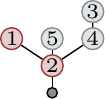}\quad+\quad\includegraphics{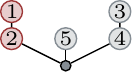}\quad+\quad\includegraphics{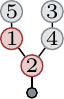}\quad+\quad\includegraphics{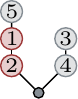}\\[2mm]\quad+\quad\includegraphics{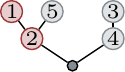}\quad+\quad\includegraphics{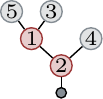}\quad+\quad\includegraphics{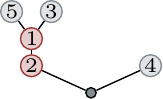}\quad+\quad\includegraphics{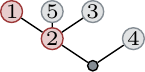}\quad+\quad\includegraphics{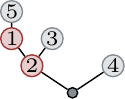}\end{array} % pdf image
}

\newcommand{\figuretwe}[1]{
	%\mathsymbol{(0,0.5)}{\s(\,}\treeshift{1.4cm/#1}{"5,2,6,4,3,9"}\mathsymbol{(0,0.5)}{\,\,\,)\quad=\quad}\treeshift{1.4cm/#1}{"4,1,5,3,2,6"} % code
	\includegraphics{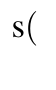}\includegraphics{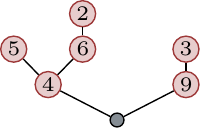}\includegraphics{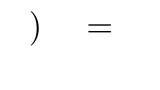}\includegraphics{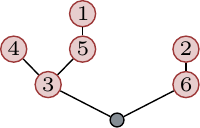} % pdf image
}

\newcommand{\figuretenthr}[1]{
	%\treecop{1.5cm/#1}{\Delta_{\s}}{"4,2,5,1,6,3"}{"1,2,4,1,5,3"}{"2,1,3,1,4,2"}{"2,1,3,1,3,2"}{"3,2,4,1,2,1"}{"3,2,4,1,5,1"} % code
	\begin{array}{c}\Delta(\includegraphics{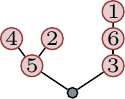})\quad=\quad\includegraphics{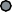}\otimes\includegraphics{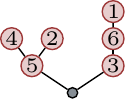}\quad+\quad\includegraphics{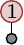}\otimes\includegraphics{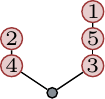}\quad+\quad\includegraphics{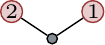}\otimes\includegraphics{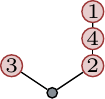}\\[0.3cm]+\quad\includegraphics{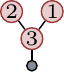}\otimes\includegraphics{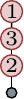}\quad+\quad\includegraphics{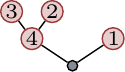}\otimes\includegraphics{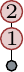}\quad+\quad\includegraphics{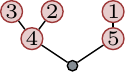}\otimes\includegraphics{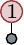}\quad+\quad\includegraphics{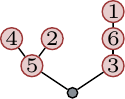}\otimes\includegraphics{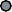}\,.\end{array} % pdf image
}

\newcommand{\figuretenfou}[1]{
	%\sortedthr{#1}{} % code
	\begin{array}{cccccccccccc}\includegraphics{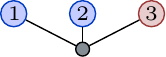}&\quad&\includegraphics{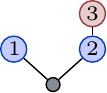}&\quad&\includegraphics{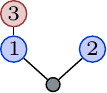}&\quad&\includegraphics{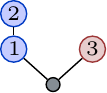}&\quad&\includegraphics{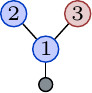}&\quad&\includegraphics{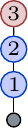}\end{array} % pdf image
}

\newcommand{\figuretenfiv}[1]{
	%\dotproduct{#1}{}{\cdot} % code
	\includegraphics{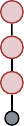}\includegraphics{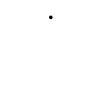}\includegraphics{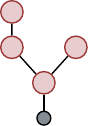}\includegraphics{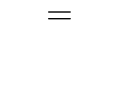}\includegraphics{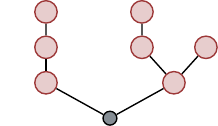} % pdf image
}

\newcommand{\figuretensix}[1]{
	%\dotproduct{#1}{1,2,3,4,1,3,2,1,2,3,7,4,6,5}{/} % code
	\includegraphics{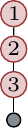}\includegraphics{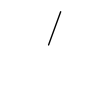}\includegraphics{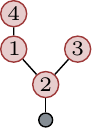}\includegraphics{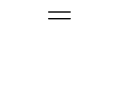}\includegraphics{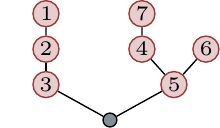} % pdf image
}

\newcommand{\figuretensev}[1]{
	%\reirrtree{1cm/#1}{"2,3,1"} % code
	\includegraphics{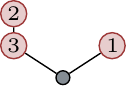} % pdf image
}

\newcommand{\figureteneig}[1]{
	%\primbasisthr{1cm/#1}{} % code
	\begin{array}{ccl}\includegraphics{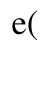}\includegraphics{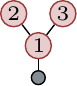}\includegraphics{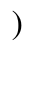}&\includegraphics{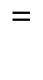}&\includegraphics{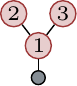}\quad\includegraphics{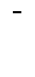}\quad\includegraphics{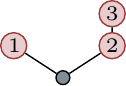}\\[0.1cm]\includegraphics{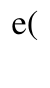}\includegraphics{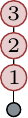}\includegraphics{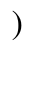}\quad&\includegraphics{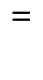}&\quad\includegraphics{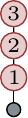}\quad\includegraphics{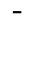}\quad\includegraphics{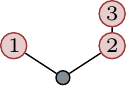}\quad\includegraphics{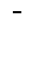}\quad\includegraphics{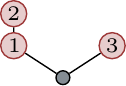}\quad\includegraphics{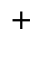}\quad\includegraphics{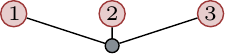}\\[0.1cm]\includegraphics{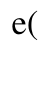}\includegraphics{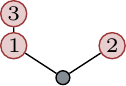}\includegraphics{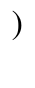}&\includegraphics{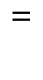}&\includegraphics{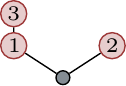}\quad\includegraphics{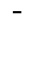}\quad\includegraphics{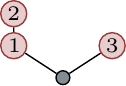}\end{array} % pdf image
}

\newcommand{\figuretennin}[1]{
	%\depthfirstwalk{1.7cm/#1}{"3,1,4,6,2,5} % code
	\includegraphics{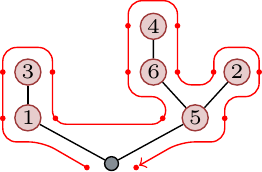} % pdf image
}

\newcommand{\figuretwenty}[1]{
	%\treepermexm{1.5cm/#1}{"2,4,1,3"} % code
	\includegraphics{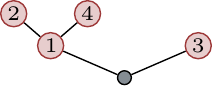} % pdf image
}

\newcommand{\figuretweone}[1]{
	%\treepermexmsign{1.4cm/#1}{"3,1,4,5,2"} % code
	\includegraphics{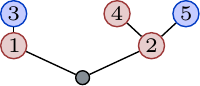} % pdf image
}

\newcommand{\figuretwetwo}[1]{
	%\treedualpro{#1}{}{}{}{\cdot_{\rm d}} % code
	\includegraphics{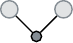}\cdot_{\rm d}\includegraphics{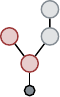}\quad=\quad\includegraphics{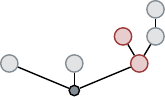}\quad\!\!\!+\!\!\!\quad\includegraphics{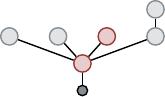}\quad\!\!\!+\!\!\!\quad\includegraphics{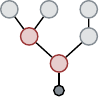}\quad\!\!\!+\!\!\!\quad\includegraphics{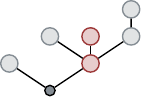}\quad\!\!\!+\!\!\!\quad\includegraphics{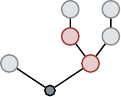}\quad\!\!\!+\!\!\!\quad\includegraphics{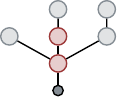} % pdf image
}

\newcommand\figuretwethr{
	%\binaryexms % code
	Y_0=\left\{\,\raisebox{-.35\height}{\includegraphics{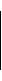}}\,\right\},\quad Y_1=\left\{\raisebox{-.3\totalheight}{\includegraphics{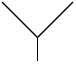}}\right\},\quad Y_2=\left\{\raisebox{-.3\totalheight}{\includegraphics{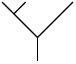}},\,\,\raisebox{-.3\totalheight}{\includegraphics{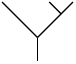}}\right\} % pdf image
}

\newcommand\figuretwefou{
	%\convexbinarytree % code	
	x\,\,=\raisebox{-.35\height}{\includegraphics{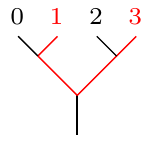}}\qquad\quad x_{[1,3]}\,\,=\raisebox{-.35\height}{\includegraphics{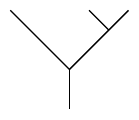}} % pdf image
}

\newcommand\figuretwefiv[1]{
	%\varphitree{0.8cm/#1} % code
	\varphi\left(\,\raisebox{-.4\height}{\includegraphics{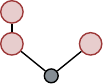}}\right)\,\,=\,\,
	\raisebox{-.4\height}{\includegraphics{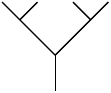}} % pdf image
}

\newcommand\figuretwesix[1]{
	%\ordervarphiexm{1cm/#1} % code
	\raisebox{-.4\height}{\includegraphics{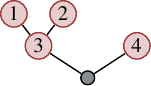}}\quad\,\,\stackrel{^{\varphi}}{\leftrightarrow}\quad\raisebox{-.4\height}{\includegraphics{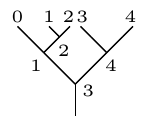}}
}

\begin{abstract}
We endow the space of rooted planar trees with an structure of Hopf algebra. We prove that variations of such a structure lead to Hopf algebras on the spaces of labelled trees, $n$--trees, increasing planar trees and sorted trees. These structures are used to construct Hopf algebras on different types of permutations. In particular, we obtain new characterizations of the Hopf algebras of Malvenuto--Reutenauer and Loday--Ronco via planar rooted trees.
\end{abstract}

\keywords{Hopf algebras \and Infinitesimal bialgebras \and Trees \and Permutations}

\section*{Introduction}

In 1989, Grossman and Larson introduced a Hopf algebra structure on the vector space of rooted trees \cite{GrLa89}. This idea appeared due to a Grossman previous work on data structures in which the trees could be operated. They not only prove that this space can be given with a Hopf algebra structure, but also the spaces of labelled rooted trees, ordered rooted trees, labelled ordered rooted trees, heap ordered rooted trees and labelled heap ordered rooted trees \cite{GrLa89}.

Since the seminal work of Grossman and Larson mentioned above, several other Hopf algebras spanned by trees have been defined, for example the Hopf algebra of rooted trees introduced by Connes and Kreimer in \cite{CoKr98} and its non commutative version for planar rooted trees introduced simultaneously by Foissy and Holtkamp in \cite{Fo02,Ho03}. On the other hand, Loday and Ronco introduced in \cite{LoRo98} a Hopf algebra spanned by planar binary trees.

It was in 1995, in order to study relations between quasi-symmetric functions and the Solomon descent algebra, that Malvenuto and Reutenauer defined a Hopf algebra on the space $\field[\Sym_{\infty}]$ spanned by the collection of all permutations \cite{MaRe95}. This Hopf algebra can be regarded as a structure of trees as well, because every permutation can be released as a labelled planar binary tree. In \cite{AgSo05}, it was proved that the Grossman--Larson Hopf algebras of ordered trees and heap ordered trees are dual to the associated graded Hopf algebras of planar binary trees and permutations respectively.

In \cite{Fo09}, Foissy introduced a different coproduct on the space spanned by planar rooted trees, which is infinitesimal, in the sense of \cite{LoRo06}, respect to the algebraic structure considered in \cite{Fo02,Ho03}. This coproduct was described in \cite{Mq18} by using convex trees that are obtained via the depth-first post order traverse of trees. It is a natural problem to endow this coproduct with an associative product that gives the space of planar rooted trees with a structure of Hopf algebra and so study its adaptation to other related trees with additional structures as it was done in \cite{GrLa89}. In the present paper, we deal with this problem. In particular, we show that the Hopf algebras of permutations and of planar binary trees can be performed via planar rooted trees through of our construction. Additionally, we study how to realise these structures as Hopf algebras of different types of permutations, namely classic permutations, Stirling permutations, etc.

The paper is organized as follows:

In Section \ref{027}, we give the basic definitions and notations about planar rooted trees. In particular, we recall the depth-first post order traverse of a tree and introduce partitions of convex trees respect to it. In Section \ref{028}, we recall the coproduct $\Delta$ defined in \cite{Mq18} and its infinitesimal structure on the space of planar rooted trees $\field[\T_{\infty}]$. We characterize the dual product induced by $\Delta$ in terms of ordered partitions of irreducible trees and order preserving maps. In Section \ref{029} we introduce an associative product on the space $\field[\T_{\infty}]$ so that it is a $2$-associative Hopf algebra (Theorem \ref{009}) in the sense of \cite{LoRo06}. We also show that the collection of $X$--labelled trees $\T(X)$ inherits directly the Hopf structure of $\field[\T_{\infty}]$.

In Section \ref{031} it is showed that the Hopf algebra constructed in Section \ref{029} can be adapted on spaces of trees with additional structures. We prove that such a structure can be adapted for the space of $n$--trees $\field[T[\infty]]$ by applying standardization on its convex trees. Also, it is showed that the space $\field[I[\infty]]$ spanned by increasing trees is a sub Hopf algebra of $\field[\T[\infty]]$. We conclude the section by proving that the collection of sorted trees can also be given with a Hopf structure which is regarded as a Hopf algebra of non planar increasing trees.

Section \ref{032} deals with the problem of construct Hopf algebras on permutations via the behaviour of the planar rooted trees in $\T[\infty]$ and its related structures. In particular, we introduce the collection of treed permutations and show that $\field[\T[\infty]]$ induces a Hopf algebra structure on the vector space generated by these permutations. Also it is proved that the set of Stirling permutations spans a sub Hopf algebra of it. We finish the section by showing that the Hopf algebra of permutations of Malvenuto--Reutenauer \cite{MaRe95,AgSo02} can be released through the Hopf structure of sorted trees.

To finish, in Section \ref{033} we use an explicit bijection between planar trees and planar binary trees, described in \cite{AgSo04}, to show that the Loday--Ronco Hopf algebra can be characterized through the dual structure defined on $\field[\T_{\infty}]$ in Section \ref{029}.

We assume the reader is familiar with the terminology of Hopf algebras, see for example \cite{Ab04} for more details.

In what follows of the paper, an \emph{algebra} means an associative $\field$--algebra with unit and a \emph{coalgebra} means a coassociative $\field$--coalgebra with counit, where $\field$ is a field.

\section{Planar rooted trees}\label{027}

Here we give the basic definitions and notations about planar rooted trees.

In what follows of this paper, a \emph{tree} will be a planar rooted tree.

The \emph{degree} of a tree $t$, denoted by $|t|$, is its number of non root nodes. We denote by $\treeroot$ the unique tree of degree zero.

It is well known that for every integer $n\geq0$ the number of all trees of degree $n$ is the $n$th Catalan number $c_n=\frac{1}{n+1}\binom{2n}{n}$. We will denote by $\T_n$ the set of all trees of degree $n$ and by $\T_{\infty}$ the set of all trees, hence\[\T_{\infty}=\bigsqcup_{n\geq0}\T_n.\]For instance, for $n=3$ we have the following five trees:\begin{figure}[H]$\figureone{#1}$\centering\caption{Trees of degree $3$.}\end{figure}

Every tree induces an order on its nodes given by the depth-first post order traverse, that is, for each node we first traverse its subtrees from left to right. Note that under this order, the root is always maximal.

For instance, we have.\begin{figure}[H]$\figuretwo{#1}$\centering\caption{Traversing of a tree of degree $7$.}\end{figure}

For a tree $t$ we will denote by $N(t)$ its set of nodes and set $N^*(t)=N(t)\backslash\{\text{root}\}$.

Given a tree $t$ and a set $A$ of nodes that does not contain the root of $t$, we denote by $t_A$ the tree obtained by removing the nodes out of $A$ and so contract its edges. Note that $t_{[]}=\treeroot$ and $t_{N(t)}=t$. We call $t_A$ a \emph{convex tree} of $t$ if $A\cap N(t)$ is an interval of $N^*(t)$.

For instance, below is a tree $t$ of degree $7$ and the convex tree $t_I$ of degree $4$ obtained by removing the first one and the last two nodes from the original tree $t$.\begin{figure}[H]$\figurethr{#1}$\centering\caption{Convex tree obtained from a tree of bigger degree.}\label{004}\end{figure}

Given a tree $t$ of degree $n$ with nodes $u_1,\ldots,u_n$, we set\begin{equation}\label{006}t_{[i,j]}=\left\{\begin{array}{ll}
t_{[u_h,u_k]}&\text{if }i\leq j\text{ and }[i,j]\cap[n]=[h,k]\neq\emptyset\\t_{[]}&\text{otherwise}.\end{array}\right.\end{equation}With this notation, the tree obtained in Figure \ref{004} is $t_{[2,5]}$. Note that $t_{[-2,9]}=t$ and $t_{[8,10]}=\treeroot$.

A \emph{$k$--partition} of a linearly ordered set $(A,<)$ is a collection of nonempty intervals $I_1,\ldots,I_k$ of $A$ such that $A=I_1\sqcup\cdots\sqcup I_k$ and $a<b$ for all $(a,b)\in I_i\times I_j$ with $i<j$. A \emph{$k$--partition} of a tree $t$ is an ordered collection $P=(t_1,\ldots,t_k)$ of convex trees of $t$ called \emph{blocks} of $P$, where $t_i=t_{I_i}$ for some $k$--partition $(I_1,\ldots,I_k)$ of $N^*(t)$. More specifically, by using Equation \ref{006}, a $k$--partition of a tree $t$ is a collection of the form\begin{equation}\label{007}\{t_{[1,n_1]},t_{[n_1+1,n_2]},\ldots,t_{[n_{k-1}+1,n]}\},\quad 1<n_1<\cdots<n_{k-1}<n.\end{equation}For instance, below is a $3$--partition $\{t_{[1,3]},t_{[4,5]},t_{[6,7]}\}$ for a tree $t$ of degree $7$.\begin{figure}[H]$\figurefou{#1}$\centering\caption{$3$--partition of a tree.}\label{005}\end{figure}Note that a tree $t$ of degree $n$ admits $k$--partitions with $k\leq n$. There is a unique $1$--partition containing itself and a unique $n$--partition formed by $n$ trees of degree $1$. We will denote by $\PP_k(t)$ the set of all $k$--partitions of $t$, and set\[\PP(t)=\PP_1(t)\sqcup\cdots\sqcup\PP_n(t).\]

\begin{pro}\label{008}
The number of $k$--partitions of a tree $t$ of degree $n$ is $|\PP_k(t)|=\binom{n-1}{k-1}$. Hence $|\PP(t)|=2^{n-1}$.
\end{pro}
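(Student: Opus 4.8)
The plan is to reduce the statement to a standard count of compositions of the integer $n$. Recall that the depth-first post order traverse linearly orders the $n$ elements of $N^*(t)$ as $u_1<\cdots<u_n$, and that by Equation \ref{007} a $k$--partition of $t$ is recorded by a chain of cut points $1\le n_1<\cdots<n_{k-1}\le n-1$. First I would observe that this correspondence is a genuine bijection between $\PP_k(t)$ and the set of ordered partitions of the linearly ordered set $N^*(t)$ into $k$ consecutive nonempty intervals. The only point that needs checking is injectivity: a priori two distinct chains of cut points could produce the same ordered tuple of convex trees. This cannot happen, because $|t_{I_m}|=|I_m|$ for every block (removing the nodes outside $I_m$ leaves exactly $|I_m|$ non--root nodes), so the ordered sequence of block degrees recovers the composition $(|I_1|,\ldots,|I_k|)$ of $n$, and hence the cut points.

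With the bijection in hand, the count becomes elementary. I would picture the $n-1$ ``gaps'' lying between consecutive nodes $u_i,u_{i+1}$ for $1\le i\le n-1$; prescribing $k$ consecutive nonempty blocks is the same as selecting the $k-1$ gaps at which one block ends and the next begins. Since every $(k-1)$--element subset of the $n-1$ gaps yields exactly one such partition, and distinct subsets yield distinct partitions, we obtain
\[|\PP_k(t)|=\binom{n-1}{k-1}.\]

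Finally, to get the total I would sum over the admissible range $1\le k\le n$ and invoke the binomial theorem:
\[|\PP(t)|=\sum_{k=1}^{n}|\PP_k(t)|=\sum_{k=1}^{n}\binom{n-1}{k-1}=\sum_{j=0}^{n-1}\binom{n-1}{j}=2^{n-1}.\]
There is no serious obstacle here, the argument being purely combinatorial; the only subtlety is the injectivity observation above, which guarantees that partitions of $t$ are indexed by their cut data rather than by the mere isomorphism types of the blocks (as would otherwise go wrong, for instance, for a corolla, where many blocks are isomorphic as abstract trees).
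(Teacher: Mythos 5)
Your argument is correct and follows the same route as the paper: both reduce the count of $k$--partitions to the count of $k$--compositions of $n$ (equivalently, choices of $k-1$ cut points among $n-1$ gaps), giving $\binom{n-1}{k-1}$, and then sum over $k$ to get $2^{n-1}$. Your explicit injectivity check via block degrees is a small addition the paper leaves implicit, but it does not change the substance.
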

\begin{proof}
From Equation \ref{007} we obtain that $|\PP_k(t)|$ corresponds to the number of $k$--compositions of $n$, which is well known to be $\binom{n-1}{k-1}$. Hence $|\PP(t)|=\sum_{k=1}^n\binom{n-1}{k-1}=2^{n-1}$.
\end{proof}

\section{Infinitesimal structure}\label{028}

Here we recall the structure of \emph{infinitesimal bialgebra} of the vector space spanned by trees defined in \cite{Fo09,Mq18}.

Recall that an \emph{infinitesimal bialgebra}, in the sense of \cite{Lo04,LoRo06}, is an algebra $H$ equipped with a coproduct $\Delta:H\to H\otimes H$ satisfying\[\Delta(xy)=\Delta(x)(1\otimes y)+(x\otimes1)\Delta(y)-x\otimes y\,\,\text{ for all }\,\,x,y\in H.\]

Given a vector space $V$, it is well known that the tensor module $T(V)=\bigoplus_{n\geq0}V^{\otimes n}$ with the concatenation product is an infinitesimal bialgebra with the \emph{deconcatenation} coproduct defined as follows\[\Delta(v_1\cdots v_n)=\sum_{i=0}^n(v_1\cdots v_i)\otimes (v_{i+1}\cdots v_n).\]

\begin{thm}[Loday--Ronco {\cite[Theorem 2.6]{LoRo06}}]\label{026}
Every connected infinitesimal bialgebra $H$ is isomorphic to $T(\Prim(H))$ with the infinitesimal bialgebra structure defined above.
\end{thm}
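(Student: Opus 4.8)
The plan is to prove this as a \emph{rigidity} (structure) theorem, exhibiting mutually inverse morphisms between $H$ and $T(\Prim(H))$ that respect both the product and the coproduct; the whole point is that the infinitesimal compatibility forces these two structures to be intertwined, and—unlike the cocommutative Milnor--Moore situation—the argument should go through over an arbitrary field $\field$.

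First I would set up the reduced picture. Connectedness means $H$ is a conilpotent coaugmented coalgebra: writing $\bar H=\ker\epsilon$ and $\Delta(x)=x\otimes1+1\otimes x+\bar\Delta(x)$ for $x\in\bar H$, the $(n-1)$-fold iterated reduced coproduct $\bar\Delta^{n-1}\colon\bar H\to\bar H^{\otimes n}$ vanishes on each element for $n\gg0$, and $\Prim(H)=\ker\bar\Delta$. Substituting $\Delta(x)=x\otimes1+1\otimes x+\bar\Delta(x)$ into the infinitesimal relation and cancelling the terms involving $1$ yields the reduced compatibility
\[
\bar\Delta(xy)=x\otimes y+(x\otimes1)\,\bar\Delta(y)+\bar\Delta(x)\,(1\otimes y),\qquad x,y\in\bar H,
\]
which is the identity coupling $\mu$ and $\bar\Delta$. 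I would also record the two universal properties of the model: $T(V)$ with concatenation is the \emph{free} associative algebra on $V$, while $T(V)$ with deconcatenation is the \emph{cofree} conilpotent coassociative coalgebra on $V$.

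Next I would produce the canonical projection onto the primitives in a characteristic-free way. Define $e=\sum_{n\ge1}(-1)^{n-1}\,\mu^{n-1}\bar\Delta^{n-1}\colon\bar H\to\bar H$, where $\mu^{n-1}$ is the iterated product; conilpotency makes this a finite sum on each element, with integer coefficients. On the model $T(V)$ one computes $\mu^{n-1}\bar\Delta^{n-1}=\binom{m-1}{n-1}\,\id$ on the weight-$m$ component (compositions of $m$ into $n$ positive parts), so $e$ acts as $\sum_{k\ge0}(-1)^k\binom{m-1}{k}$, i.e. as the projection onto weight $1$; the same binomial/telescoping cancellation, run through the reduced compatibility above, shows in general that $e$ is idempotent, restricts to the identity on $\Prim(H)$, and satisfies $\bar\Delta\circ e=0$, so that $\mathrm{im}(e)=\Prim(H)$. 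With $e$ in hand, cofreeness applied to the linear map $e\colon\bar H\to\Prim(H)$ produces a coalgebra morphism $\Phi\colon H\to T(\Prim(H))$ with $\Phi(x)=\epsilon(x)\,1+\sum_{n\ge1}e^{\otimes n}\bar\Delta^{n-1}(x)$, and freeness applied to the inclusion $\Prim(H)\hookrightarrow H$ produces an algebra morphism $\Psi\colon T(\Prim(H))\to H$.

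Finally I would show that $\Phi$ and $\Psi$ are mutually inverse. Both are filtered for the coradical filtrations, both restrict to the identity on $\Prim(H)$, and the associated graded of the cofree coalgebra $T(\Prim(H))$ is generated by its primitives; hence $\Phi$ induces an isomorphism on associated graded and is therefore an isomorphism, with $\Psi=\Phi^{-1}$. The main obstacle is the reconciliation of the two structures: I must check that the single isomorphism $\Phi$ is simultaneously an algebra and a coalgebra map. This is exactly where the reduced compatibility is indispensable—used once to prove $\bar\Delta\circ e=0$, so that $e$ genuinely lands in the primitives and $\Phi$ is a well-defined coalgebra map, and again to verify multiplicativity $\Phi(xy)=\Phi(x)\Phi(y)$, equivalently that the coalgebra isomorphism $\Phi$ transports the product of $H$ to concatenation. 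Getting the binomial cancellations to hold \emph{integrally}, rather than only after dividing by factorials, is what keeps the whole argument valid in every characteristic, and is the part I expect to require the most care.
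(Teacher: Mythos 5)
The paper does not prove this statement: it is imported verbatim from Loday--Ronco \cite[Theorem 2.6]{LoRo06}, so there is no internal proof to compare against. Your sketch is, in outline, exactly the original Loday--Ronco argument: the reduced compatibility $\bar\Delta(xy)=x\otimes y+(x\otimes1)\bar\Delta(y)+\bar\Delta(x)(1\otimes y)$, the integrally-defined idempotent $e=\sum_{n\ge1}(-1)^{n-1}\mu^{n-1}\bar\Delta^{n-1}$ (which the paper itself later invokes as \cite[Proposition 2.5]{LoRo06}), the coalgebra map $\Phi=\sum_n e^{\otimes n}\bar\Delta^{n-1}$ obtained from cofreeness, and the algebra map $\Psi$ from freeness. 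Two spots deserve sharpening. First, the precise lemma driving multiplicativity of $\Phi$ is $e(xy)=0$ for $x,y\in\bar H$ (stated explicitly in the paper's discussion of primitive elements); iterating the reduced compatibility expresses $\bar\Delta^{n-1}(xy)$ as concatenation-type terms plus terms containing a tensor factor of the form $x_{(i)}y_{(1)}$ with both factors in $\bar H$, and it is exactly $e(xy)=0$ that kills the latter after applying $e^{\otimes n}$ --- you gesture at this but should name it. Second, your closing step (``$\Phi$ induces an isomorphism on associated graded because $T(\Prim(H))$ is generated by its primitives'') conflates the algebra and coalgebra roles of the two sides; a cleaner finish is to note that $\Phi\circ\Psi$ is an algebra endomorphism of the free algebra $T(\Prim(H))$ fixing $\Prim(H)$, hence the identity, so $\Phi$ is surjective, while $\Phi$ is injective because a coalgebra morphism of conilpotent coalgebras that is injective on primitives is injective. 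With those two repairs the argument is complete and, as you say, characteristic-free.
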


Recall that $\field[\T_{\infty}]$ denotes the vector space over $\field$ spanned by $\T_{\infty}$ which admits a natural graduation given as follows\[\field[\T_{\infty}]=\bigoplus_{n\geq0}\field[\T_n].\]

There is a natural product with unit $\treeroot$ that endows $K[\T_{\infty}]$ with an structure of algebra. Indeed, given two trees $t,w\in\T_{\infty}$, the \emph{dot product} of $t$ with $w$, denoted by $t\cdot w$, is the tree obtained by identifying the roots of $t$ and $w$. For instance\begin{figure}[H]$\figuretenfiv{#1}$\centering\caption{Dot product of trees.}\end{figure}

We say that $t\in\T_{\infty}$ is \emph{reducible}, respect to the dot product, if there are non trivial trees $u,v\in\T_{\infty}$ such that $t=u\cdot v$, otherwise $t$ is called \emph{irreducible}. Note that a tree is irreducible if its root has a unique child, and that every non trivial tree can be uniquely written as a product of irreducible trees.

The following coproduct $\Delta:\field[T_{\infty}]\to\field[T_{\infty}]\otimes\field[T_{\infty}]$ was originally introduced in \cite{Fo09}, described in terms of forests of trees, and appears naturally in the notion of compatible associative bialgebra defined in \cite{Mq18}. Here we use $\Delta$ as described in \cite{Mq18} in terms of convex trees that are obtained via the depth-first post traverse of trees, that is\[\Delta(t):=\sum_{k=0}^nt_{[1,k]}\otimes t_{[k+1,n]},\qquad t\in\T_{\infty},\,n=|t|.\]We will denote the coproduct above, by using Sweedler's notation, simply by $t_{(1)}\otimes t_{(2)}$.

\begin{exm}
Below the coproduct of a tree of degree six.\begin{figure}[H]$\figurefiv{#1}$\centering\caption{Coproduct of a tree.}\end{figure}
\end{exm}

We also may endow $\field[\T_{\infty}]$ with a counit $\epsilon:\field[\T_{\infty}]\to\field$ defined by $\epsilon(t)=1$ if $|t|=0$ and $\epsilon(t)=0$ otherwise.

\begin{pro}[Foissy {\cite[Theorem 9]{Fo09}}, M\'arquez {\cite[Proposition 4.8]{Mq18}}]\label{010}
The tuple $(\field[\T_{\infty}],\cdot,\Delta)$ is a graded infinitesimal bialgebra.
\end{pro}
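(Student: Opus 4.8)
The plan is to verify separately the three ingredients hidden in the statement: that $(\field[\T_{\infty}],\cdot)$ is a graded algebra, that $(\field[\T_{\infty}],\Delta,\epsilon)$ is a graded coalgebra, and that the infinitesimal compatibility $\Delta(t\cdot w)=\Delta(t)(1\otimes w)+(t\otimes1)\Delta(w)-t\otimes w$ holds on generators. The grading costs nothing: since $|u\cdot v|=|u|+|v|$, and since $t_{[1,k]}$ and $t_{[k+1,n]}$ have degrees $k$ and $n-k$, the product lands in the correct graded piece and $\Delta(\field[\T_n])\subseteq\bigoplus_{i+j=n}\field[\T_i]\otimes\field[\T_j]$. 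The associativity of $\cdot$ and the fact that $\treeroot$ is its unit are immediate from the description of the dot product, since identifying roots is an associative operation with the one--node tree as neutral element. So everything reduces to two coalgebra computations and one compatibility computation, and all three I would deduce from a single structural lemma describing how the depth-first post order traverse interacts with contraction onto a convex set and with the dot product.

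The key structural input I would isolate is the following. First, \emph{nesting}: for a tree $t$ of degree $n$, the traverse of a convex tree $t_{[a,b]}$ visits its non-root nodes in the same relative order as they occur in $t$, so that contraction is transitive in the sense that $(t_{[1,k]})_{[1,j]}=t_{[1,j]}$ and $(t_{[1,k]})_{[j+1,k]}=t_{[j+1,k]}$ for $j\leq k$, while $(t_{[k+1,n]})_{[1,\ell-k]}=t_{[k+1,\ell]}$ and $(t_{[k+1,n]})_{[\ell-k+1,n-k]}=t_{[\ell+1,n]}$ for $k\leq\ell\leq n$. Second, \emph{decomposition under the dot product}: writing $m=|t|$ and $p=|w|$, the traverse of $t\cdot w$ lists the nodes of $N^*(t)$ first, then those of $N^*(w)$, then the shared root, whence for $0\leq k\leq m$ one has $(t\cdot w)_{[1,k]}=t_{[1,k]}$ and $(t\cdot w)_{[k+1,m+p]}=t_{[k+1,m]}\cdot w$, and for $0\leq j\leq p$ one has $(t\cdot w)_{[1,m+j]}=t\cdot w_{[1,j]}$ and $(t\cdot w)_{[m+j+1,m+p]}=w_{[j+1,p]}$. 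Establishing these two facts carefully is the main obstacle, since it requires a genuine argument about how edge contraction preserves the order produced by the depth-first post order traverse; everything else is bookkeeping.

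Granting the nesting fact, coassociativity follows by computing both iterates as the same triple sum. Indeed, substituting $\Delta(t_{[1,k]})=\sum_{j=0}^k t_{[1,j]}\otimes t_{[j+1,k]}$ gives
\[(\Delta\otimes\id)\Delta(t)=\sum_{0\leq j\leq k\leq n}t_{[1,j]}\otimes t_{[j+1,k]}\otimes t_{[k+1,n]},\]
while substituting $\Delta(t_{[k+1,n]})=\sum_{\ell=k}^{n}t_{[k+1,\ell]}\otimes t_{[\ell+1,n]}$ gives
\[(\id\otimes\Delta)\Delta(t)=\sum_{0\leq k\leq\ell\leq n}t_{[1,k]}\otimes t_{[k+1,\ell]}\otimes t_{[\ell+1,n]},\]
and the two sums coincide after relabelling. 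The counit axiom is even shorter: in $(\epsilon\otimes\id)\Delta(t)=\sum_{k=0}^{n}\epsilon(t_{[1,k]})\,t_{[k+1,n]}$ only the term $k=0$ survives since $\epsilon$ vanishes off degree zero and $t_{[1,0]}=\treeroot$, leaving $t_{[1,n]}=t$; symmetrically $(\id\otimes\epsilon)\Delta(t)=t$.

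Finally, granting the decomposition fact, the infinitesimal compatibility follows by splitting the range of $\Delta(t\cdot w)=\sum_{k=0}^{m+p}(t\cdot w)_{[1,k]}\otimes(t\cdot w)_{[k+1,m+p]}$ at $k=m$. The indices $0\leq k\leq m$ contribute exactly $\sum_{k=0}^{m}t_{[1,k]}\otimes(t_{[k+1,m]}\cdot w)=\Delta(t)(1\otimes w)$, while the indices $k=m+j$ with $1\leq j\leq p$ contribute $\sum_{j=1}^{p}(t\cdot w_{[1,j]})\otimes w_{[j+1,p]}$, which is $(t\otimes1)\Delta(w)$ with its $j=0$ term $t\otimes w$ removed. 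Adding these two blocks reproduces $\Delta(t)(1\otimes w)+(t\otimes1)\Delta(w)-t\otimes w$, as required. Extending all identities bilinearly to $\field[\T_{\infty}]$ then completes the argument.
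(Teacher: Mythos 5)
Your argument is correct in outline and the computations check out, but note that the paper itself offers no proof of this proposition: it is imported by citation from Foissy \cite{Fo09} and M\'arquez \cite{Mq18}, so any direct verification is necessarily a different route from the paper's. Your reduction of everything to two structural facts --- that extracting a convex tree is compatible with the depth-first post order traverse, so that $(t_{[1,k]})_{[1,j]}=t_{[1,j]}$ and its companions hold, and that the traverse of $t\cdot w$ lists the non-root nodes of $t$, then those of $w$, then the shared root --- is the right way to organize a self-contained proof, and the subsequent index-splitting computations for coassociativity, the counit, and the relation $\Delta(t\cdot w)=\Delta(t)(1\otimes w)+(t\otimes 1)\Delta(w)-t\otimes w$ are all correct; in particular you handle the overlap between the $k=m$ term of the first block and the $j=0$ term of the second properly, which is exactly where the subtracted $t\otimes w$ comes from. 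What you have not actually supplied is a proof of the structural lemma itself: you state it precisely and flag it as the main obstacle, but as written the argument remains a plan at that point. To close it you would argue that deleting a post-order interval of non-root nodes and contracting the incident edges reattaches each surviving node to its nearest surviving ancestor without disturbing the left-to-right order of subtrees, from which both the order-preservation claim and the transitivity $(t_A)_B=t_B$ for $B\subseteq A$ follow; this is essentially the content of the cited results, and with it your proof would be complete and would buy the reader a verification that the paper itself leaves entirely to the references.
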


\begin{rem}
Since $\field[\T_{\infty}]$ is connected, Proposition \ref{010} and Theorem \ref{026} imply that $\field[\T_{\infty}]$ is isomorphic to the infinitesimal bialgebra $T(\Prim(\field[\T_{\infty}]))$. In particular, $\field[\T_{\infty}]$ is cofree among the collection of connected coalgebras.
\end{rem}

\subsection{Dual structure}

For a graded infinitesimal bialgebra $H$ with finite dimensional homogeneous components, we have that its dual graded space $H^*$ is an infinitesimal bialgebra.

The dual infinitesimal structure of $\field[\T_{\infty}]$ was studied by Foissy in \cite{Fo09}. Here we will give a description of the dual product in terms of ordered partitions of irreducible trees and order preserving maps. Let $\cdot_{\rm d}$ and $\Delta_{\rm d}$ be the respective dual product and dual coproduct induced by the infinitesimal bialgebra $(\field[\T_{\infty}],\cdot,\Delta)$. By identifying each tree $t$ with $\delta_t$ we have that $t\cdot_{\rm d}w=\sum u$, where $t\otimes w$ is an addend of $\Delta(u)$, and $\Delta_{\rm d}(t)=\sum_{u\cdot w=t}u\otimes w$. 

It is immediate that the coproduct may be expressed as\[\Delta_{{\rm d}}(t_1\cdots t_k)\sum_{i=0}^{k}t_1\cdots t_i\otimes t_{i+1}\cdots t_k,\]where $t=t_1\cdots t_k$ is the unique decomposition of $t$ as irreducible trees.

For the product, given a tree $t$ and a leaf of it, there is a unique path from the root of $t$ to this leaf called \emph{branch}. We denote by $p_1(t)$ the branch of $t$ correspondent to the leftmost leaf of $t$. We consider the set of nodes of $p_1(t)$, which includes the root, ordered from bottom to top.

Let $t,w$ be two trees and consider $P=(u_1,\ldots,u_j)$ be an ordered partition of the unique decomposition of $t$ in irreducible trees, that is, $u_1,\ldots,u_j$ are trees such that $t=u_1\cdots u_j$. Note that $u_1,\ldots,u_j$ are not necessarily irreducible. Given an order preserving map $f:P\rightarrow N(p_1(w))$ we denote by $t\cdot_{(P,f)}w$ the tree obtained by gluing the root of each tree $u_i$ at the node $f(u_i)$ of $t$ such that it is leftmost respect to the initial children of $f(u_i)$. The dual product then may be described by the following formula\[t\cdot_{\rm d}w=\sum_{(P,f)}t\cdot_{(P,f)}w.\]Note that if $t=t_1\cdots t_k$ is the unique decomposition of $t$ in irreducible trees with $|N(p_1(w))|=l+1$, then the product $t\cdot_{{\rm d}} w$ has $\binom{k+l}{k}=\binom{k+l}{l}$ addends.\[\figuretwetwo{#1}\]

\section{Hopf algebra of planar trees}\label{029}

In this section we endow the infinitesimal bialgebra of trees with a product $\tprod:\field[\T_{\infty}]\otimes\field[\T_{\infty}]\to\field[\T_{\infty}]$ such that the triple $(\field[\T_{\infty}],*,\Delta)$ is a Hopf algebra whose unit coincides with the unit of the dot product.

\subsection{Product}

Let $t,u$ be two trees with $m=|t|$ and $n=|u|$, let $P=(u_1,\ldots,u_k)$ be a $k$--partition of $u$, and let $f:P\to N(t)$ be an order preserving map. The \emph{hash product} of $t$ with $u$ respect to $(P,f)$, denoted by $t\hash{P,f}u$, is the tree obtained by gluing the root of each tree $u_i$ at the node $f(u_i)$ of $t$ such that it is rightmost respect to the initial children of $f(u_i)$. Note that $t\hash{I,f}u\in\T_{m+n}$. To short, if $P,f$ are clear in the context, we will denote $t\hash{P,f}u$ by $t\hash{}u$ instead. Note that $\treeroot\hash{}u=u$, and we set $t\hash{}\treeroot:=t$.

Since order preserving maps are injective, such a map exists only if $k\leq m+1$. We will denote by $I(P,t)$ the set of all order preserving maps from $P$ to $N(t)$. Note that $|I(P,t)|=\binom{m+1}{k}$.

\begin{exm}
Let $t$ be the tree of degree $7$ in Figure \ref{004}, let $w$ be the tree and its $3$--partition $P$ given in Figure \ref{005}, and let $f:P\to N(t)$ be the order preserving map sending the first tree of $P$ to the third node of $t$, the second one to the sixth node, and the last tree to the root. Hence, the hash product $t\hash{P,f}w$ is obtained as follows:\begin{figure}[H]$\figuresix{#1}$\centering\caption{Hash product of trees.}\end{figure}
\end{exm}

Given two planar trees $t,w$, the \emph{product} of $t$ with $w$ is defined as follows\[t\tprod w:=\sum_{P\in\PP(w)}\,\,\sum_{f\in I(P,t)}\!\!\!t\hash{P,f}w=\sum_{P,f}t\hash{P,f}w.\]

\begin{exm}
Below the product of two trees of degrees $2$ and $3$ respectively.\begin{figure}[H]$\figuresev{#1}$\centering\caption{Product of trees.}\end{figure}
\end{exm}

To prove Proposition \ref{003} we need the following lemmas.

\begin{lem}\label{001}
For every planar trees $t,u$, the product $t\tprod u$ is formed by $\binom{m+n}{m}$ addends, where $m=|t|$ and $n=|u|$.
\end{lem}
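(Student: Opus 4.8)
The plan is to count the addends of $t\tprod u$ by summing, over all partitions $P$ of $u$, the number of order preserving maps $f\in I(P,t)$, and to recognize the resulting sum as a Vandermonde-type identity. By definition,
\[
t\tprod u=\sum_{P\in\PP(u)}\sum_{f\in I(P,t)}t\hash{P,f}u,
\]
and the first observation I would record is that distinct pairs $(P,f)$ give distinct trees $t\hash{P,f}u$, so that counting addends amounts to counting pairs $(P,f)$. This distinctness is the one point requiring a small argument: since the grafting attaches each block $u_i$ at a specific node $f(u_i)$ as the rightmost child, one can read off both the partition $P$ and the map $f$ from the resulting tree by restricting to the subtree on the newly added nodes and the attachment sites; I would note this as an injectivity remark and defer the careful bookkeeping to the reader, since it is routine.

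Granting distinctness, the number of addends is
\[
\#(t\tprod u)=\sum_{P\in\PP(u)}|I(P,t)|.
\]
Here I would invoke two facts already established in the excerpt. First, from the remark preceding the lemma, $|I(P,t)|=\binom{m+1}{k}$ whenever $P$ is a $k$--partition of $u$, since an order preserving map from a $k$--element linearly ordered set into the $(m+1)$--element set $N(t)$ is determined by its injective increasing image. Second, from Proposition \ref{008}, the number of $k$--partitions of $u$ is $|\PP_k(u)|=\binom{n-1}{k-1}$. Since $\PP(u)=\PP_1(u)\sqcup\cdots\sqcup\PP_n(u)$, grouping the partitions by their number of blocks gives
\[
\#(t\tprod u)=\sum_{k=1}^{n}|\PP_k(u)|\,\binom{m+1}{k}=\sum_{k=1}^{n}\binom{n-1}{k-1}\binom{m+1}{k}.
\]

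It remains to evaluate this sum as $\binom{m+n}{m}$. Reindexing with $j=k-1$ turns it into $\sum_{j\geq0}\binom{n-1}{j}\binom{m+1}{j+1}$, which by the symmetry $\binom{m+1}{j+1}=\binom{m+1}{m-j}$ is exactly the Vandermonde convolution $\sum_{j}\binom{n-1}{j}\binom{m+1}{m-j}=\binom{(n-1)+(m+1)}{m}=\binom{m+n}{m}$. I expect the Vandermonde step to be entirely routine; the only genuinely substantive point in the whole argument is the distinctness of the trees $t\hash{P,f}u$ over distinct pairs $(P,f)$, and I would stress that it relies crucially on the rightmost-child gluing convention, which fixes the planar position of each grafted block unambiguously and thereby lets $(P,f)$ be recovered from the output tree.
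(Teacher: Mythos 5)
Your proof is correct and follows essentially the same route as the paper's: count pairs $(P,f)$ by grouping partitions according to their number of blocks $k$, use $|\PP_k(u)|=\binom{n-1}{k-1}$ from Proposition \ref{008} together with $|I(P,t)|=\binom{m+1}{k}$, and evaluate $\sum_k\binom{n-1}{k-1}\binom{m+1}{k}$ by Vandermonde's identity. The only difference is your added remark on the distinctness of the trees $t\hash{P,f}u$, which the paper does not address (it simply counts the terms of the formal sum indexed by pairs $(P,f)$), so that point is harmless extra care rather than a divergence in method.
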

\begin{proof}
Proposition \ref{008} implies that, for each $k\leq r:=\min(n,m+1)$, there are $\binom{n-1}{k-1}$ $k$--partitions of $u$, and for each $k$--partition there are $\binom{m+1}{k}$ order preserving maps to $N(t)$. So, there are $\binom{n-1}{k-1}\binom{m+1}{k}$ hash products of $t$ with $u$. Hence, the number of addends of $t\tprod u$ is given by the sum $\sum_{k=1}^r\binom{n-1}{k-1}\binom{m+1}{k}$. The fact that $\binom{m+n}{n}=\binom{m+n}{m}$ and the \emph{Vandermonde's identity}\footnote{Recall that the Vandermonde's identity claims that $\sum_{k=0}^r\binom{m}{k}\binom{n}{r-k}=\binom{m+n}{r}$.} imply that there are $\binom{m+n}{m}$ addends.
\end{proof}

\begin{lem}\label{002}
For planar trees $t,u,w\in\T_{\infty}$, the products $t\tprod(u\tprod w)$ and $(t\tprod u)\tprod w$ have the same number of addends.
\end{lem}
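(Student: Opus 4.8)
The plan is to reduce the statement to a count of terms expressed purely in terms of the degrees $m=|t|$, $n=|u|$ and $p=|w|$, and then to match the two counts by a binomial identity. Throughout I interpret the number of addends as the number of terms in the formal expansion of the product, i.e.\ the number of pairs $(P,f)$ produced by the definition of $\tprod$, which is precisely what Lemma \ref{001} computes; collecting like trees is irrelevant here.

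First I would use the key structural fact that the hash product of a tree of degree $a$ with a tree of degree $b$ always lies in $\T_{a+b}$, so that every addend of an intermediate product has a common, predictable degree. Expanding $u\tprod w$ via Lemma \ref{001} gives a sum of $\binom{n+p}{n}$ trees, each of degree $n+p$. Since $\tprod$ is extended bilinearly, $t\tprod(u\tprod w)$ is the sum of the products $t\tprod v$ over these $\binom{n+p}{n}$ trees $v$. Because the number of addends of $t\tprod v$ depends only on the degrees $|t|=m$ and $|v|=n+p$, Lemma \ref{001} gives $\binom{m+n+p}{m}$ addends for each such $v$, independently of the choice of $v$. Hence $t\tprod(u\tprod w)$ has exactly $\binom{n+p}{n}\binom{m+n+p}{m}$ addends.

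By the symmetric computation, $t\tprod u$ is a sum of $\binom{m+n}{m}$ trees of degree $m+n$, each of which, multiplied on the right by $w$, contributes $\binom{m+n+p}{p}$ addends; thus $(t\tprod u)\tprod w$ has $\binom{m+n}{m}\binom{m+n+p}{p}$ addends. It then remains to verify the identity $\binom{n+p}{n}\binom{m+n+p}{m}=\binom{m+n}{m}\binom{m+n+p}{p}$. Rewriting both products in factorials, each side collapses to the multinomial coefficient $\tfrac{(m+n+p)!}{m!\,n!\,p!}$, so the two counts coincide.

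Since the real content is just the uniform-degree observation together with this factorial cancellation, I do not expect a genuine obstacle. The one point that needs care is checking that all addends of $u\tprod w$ (respectively of $t\tprod u$) genuinely share the same degree, so that the count of Lemma \ref{001} can be factored out of the bilinear sum; this is exactly what lets me turn the expansion into a simple product of two binomial coefficients, after which the statement follows from elementary algebra.
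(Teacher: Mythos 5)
Your proof is correct and follows essentially the same route as the paper's: both expand the iterated product via Lemma \ref{001}, use the fact that all addends of the inner product share the same degree, and reduce the resulting products of binomial coefficients to the common multinomial value $\tfrac{(m+n+p)!}{m!\,n!\,p!}$. Your write-up is slightly more explicit about why the degree is uniform across addends and about the factorial cancellation, but the argument is the same.
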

\begin{proof}
Let $m,n,r$ be the degrees of $t,u$ and $w$ respectively. Lemma \ref{001} implies that $u\tprod w$ has $\binom{n+r}{n}$ addends of degree $n+r$. Hence $t\tprod(u\tprod w)$ has $\binom{m+n+r}{m}\binom{n+r}{n}$ addends. Similarly, Lemma \ref{001} implies that $t\tprod u$ has $\binom{m+n}{m}$ addends of degree $m+n$. Hence $(t\tprod u)\tprod w$ has $\binom{m+n+r}{m+n}\binom{m+n}{m}=\binom{m+n+r}{m}\binom{n+r}{n}$ addends. Therefore $t\tprod(u\tprod w)$ and $(t\tprod u)\tprod w$ have the same number of addends.
\end{proof}

\begin{pro}\label{003}
The pair $(\field[\T_{\infty}],\tprod)$ is an algebra.
\end{pro}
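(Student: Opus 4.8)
The unit is already at hand: with the conventions fixed when $\tprod$ was defined one has $t\tprod\treeroot=t$ and $\treeroot\tprod u=u$, since the only partition of $\treeroot$ is empty and the only order preserving injection into $N(\treeroot)=\{\text{root}\}$ sends the single block $u$ to the root, producing $\treeroot\hash{}u=u$. Hence the whole content of the statement is the associativity of $\tprod$. My plan is to reinterpret a single product $t\tprod u$ as a sum over \emph{shuffles} of the depth-first post order node sequences of $t$ and $u$, and then to derive associativity from the associativity of shuffling three sequences.

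First I would prove a normal form. Write $a_1<\cdots<a_m$ for the non root nodes of $t$ and $b_1<\cdots<b_n$ for those of $u$, in post order. The rightmost child convention has the following effect: when a block $u_i=u_{[p,q]}$ is glued at $f(u_i)$, its nodes are inserted in the post order of $t\hash{P,f}u$ immediately before $f(u_i)$ and after all nodes already sitting below it, i.e. into the ``gap'' that precedes $f(u_i)$ in the post order sequence of $t$ (the root accounting for the last gap, after $a_m$). Since a $k$--partition of $u$ is exactly a composition of $n$ into $k$ consecutive runs (Equation \ref{007}) and an order preserving injection $f$ selects $k$ of the $m+1$ gaps in increasing order, the datum $(P,f)$ is the same as an interleaving $\sigma$ of $b_1\cdots b_n$ into $a_1\cdots a_m$, that is, a linear order on $N^*(t)\sqcup N^*(u)$ restricting to the two post orders. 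Moreover $\sigma$, together with the fixed shapes of $t$ and $u$, reconstructs the planar tree $T_\sigma:=t\hash{P,f}u$ uniquely: the $t$ nodes keep their structure, and each maximal run of $u$ nodes is re-grafted, as a block, as the rightmost children of the node that follows it in $\sigma$. Thus
\[
t\tprod u=\sum_{\sigma}T_\sigma,
\]
the sum ranging over all such shuffles; this recovers the count $\binom{m+n}{m}$ of Lemma \ref{001} and is exactly the bijection underlying the Vandermonde identity used there.

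With this normal form I would index the two iterated products by triple shuffles. A term of $(t\tprod u)\tprod w$ is a shuffle $\sigma$ of $N^*(u)$ into $N^*(t)$ followed by a shuffle of $N^*(w)$ into the merged sequence; a term of $t\tprod(u\tprod w)$ is a shuffle of $N^*(w)$ into $N^*(u)$ followed by a shuffle of that result into $N^*(t)$. In both cases the combined datum is precisely one linear order $\tau$ on $N^*(t)\sqcup N^*(u)\sqcup N^*(w)$ restricting to the three post orders, and each such $\tau$ arises exactly once on either side. This yields a canonical bijection between the indexing sets of the two products, refining the equality of cardinalities already established in Lemma \ref{002}. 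It then remains to check that, under this bijection, the two trees attached to a given $\tau$ coincide.

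The heart of the argument, and the step I expect to be the main obstacle, is precisely this identification: that the planar tree produced from $\tau$ is independent of the order in which the insertions are performed. I would prove it by showing that the parent of every node in the output depends only on $\tau$ and the three fixed shapes. Concretely, a node $x$ from an input $I\in\{t,u,w\}$ keeps its parent from $I$ as long as that parent lies in the same $\tau$--run of $I$ nodes, and is otherwise re-grafted to the nearest admissible attachment point to its right in $\tau$; the delicate point is that, unlike in the binary case, a $w$ node may legitimately end up beneath a $u$ node. The rightmost child convention is exactly what makes the two constructions agree: at each attachment point, blocks coming from an inner factor sit to the left of blocks inserted afterwards, which is the same relative planar position they acquire if one instead forms $u\tprod w$ first and grafts the result into $t$. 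Making this compatibility precise, most cleanly by induction on $|w|$ so as to reduce to the insertion of a single leaf, gives $T_\tau=T'_\tau$ for every $\tau$, hence $(t\tprod u)\tprod w=t\tprod(u\tprod w)$. Because the two sums are matched term by term through an explicit bijection, no cancellation or multiplicity issues can occur.
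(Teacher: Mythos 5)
Your argument is correct, but it is organized genuinely differently from the paper's proof. The paper works directly on the defining data: it takes an addend $u\hash{P,f}(v\hash{Q,g}w)$ of $u\tprod(v\tprod w)$, restricts the partition $P$ of $v\hash{Q,g}w$ to partitions $P_v$ of $v$ and $P_w$ of $w$, rewrites the same tree as $(u\hash{P_v,f'}v)\hash{P_w,g'}w$, and then closes the argument by invoking Lemma \ref{002} (equal numbers of addends on both sides). You instead establish a normal form --- a pair $(P,f)$ is the same datum as a shuffle of the two post-order node sequences, which is precisely the bijection behind the Vandermonde count in Lemma \ref{001} --- so that both iterated products are indexed by linear orders on $N^*(t)\sqcup N^*(u)\sqcup N^*(w)$ restricting to the three post-orders, and associativity of shuffles matches the terms canonically. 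This buys a real improvement: the paper's combination of ``each addend of one side occurs on the other'' with a cardinality count tacitly requires the correspondence on index data to be injective, which is not checked there, whereas your bijection is manifestly one-to-one; you also dispense with Lemma \ref{002} entirely. The cost is that the remaining content --- that the tree attached to a fixed triple shuffle $\tau$ is independent of which insertion is performed first --- is only sketched, though this is exactly the verification the paper compresses into its definitions of $P_v$, $P_w$, $f'$, $g'$, so the two are at a comparable level of rigor. A clean way to complete your final step is to note that, thanks to the rightmost-child convention, both candidate trees have post-order equal to $\tau$, and a planar rooted tree is determined by its post-order together with its parent function, so it suffices to check that the parent of each node is the first admissible node following its maximal run in $\tau$ on both sides.
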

\begin{proof}
To prove that $\field[\T_{\infty}]$ is an algebra we have to check the product is associative. Let $u,v,w$ be trees of degrees $m,n$ and $r$ respectively. An addend of $u\tprod(v\tprod w)$ is a tree $z:=u\hash{P,f}(v\hash{Q,g}w)$, where $Q$ is a partition of $w$, $g\in I(Q,v)$, $P$ is a partition of $v\hash{Q,g}w$ and $f\in I(P,u)$. We will show that $t$ may be written as an addend of $(u\tprod v)\tprod w$. Note that $z'=z_{N(u)\cup N(v)}$ is a subtree of $z$, and that $z'=u\hash{P_v,f'}v$, where $P_v$ is the partition of $v$ obtained by restricting $P$, that is $P_v:=\{t_{N(v)}\mid t\in P\}\backslash\{\treeroot\}$, and $f'\in I(P_v,u)$ defined by $f'(t_{N(v)})=f(t)$. Moreover, $z=z'\hash{P_w,g'}w$, where, as above, $P_w$ is the partition of $w$ obtained by restricting $P$, and $g'\in I(P_w,z')$ defined by $g'(t_{N(w)})=g(t)$ if the parent of $t$ belongs to $N^*(v)$ and $g'(t_{N(w)})=f(t)$ otherwise. Hence $u\hash{P,f}(v\hash{Q,g}w)=(u\hash{P_v,f}v)\hash{P_w,g'}w$. This fact and Lemma \ref{002} imply that $u\tprod(v\tprod w)=(u\tprod v)\tprod w$. Therefore, the product is associative.
\end{proof}

\subsection{Bialgebra}

\begin{lem}\label{025}
Let $u$ and $v$ be trees with $|u|=m$ and $|v|=n$. Then:
\begin{enumerate}
\item For a hash product $u\hash{}v$ and $k\in[m+n]_0$, there is $(i,j)\in[m]_0\times[n]_0$ such that\[(u\hash{}v)_{[1,k]}\otimes(u\hash{}v)_{[k+1,m+n]}=(u_{[1,i]}\hash{}v_{[1,j]})\otimes(u_{[i+1,m]}\hash{}v_{[j+1,n]}),\]for some hash products $u_{[1,i]}\hash{}v_{[1,j]}$ and $u_{[i+1,m]}\hash{}v_{[j+1,n]}$.
\item Reciprocally, for $(i,j)\in[m]_0\times[n]_0$ and hash products $u_{[1,i]}\hash{}v_{[1,j]}$ and $u_{[i+1,m]}\hash{}v_{[j+1,n]}$, there is a hash product $u\hash{}v$ such that\[(u_{[1,i]}\hash{}v_{[1,j]})\otimes(u_{[i+1,m]}\hash{}v_{[j+1,n]})=(u\hash{}v)_{[1,i+j]}\otimes(u\hash{}v)_{[i+j+1,m+n]}.\]
\end{enumerate}
\end{lem}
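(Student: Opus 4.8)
The plan is to reduce both statements to a single description of how the depth-first post-order traversal behaves under a hash product, and then read off Parts (1) and (2) from it. Write $x_1<\cdots<x_m$ for the post-order of $u$ and $y_1<\cdots<y_n$ for that of $v$, and fix a hash product $u\hash{P,f}v$ with $P=(v_1,\ldots,v_s)$, $v_l=v_{[a_l,b_l]}$, and $c_l$ the post-order index of $f(v_l)$ in $u$. The first step is the structural claim: in $u\hash{}v$ the nodes of each block $v_l$ form a contiguous run placed immediately before $f(v_l)$, so the post-order of $u\hash{}v$ is a shuffle of the post-orders of $u$ and $v$ in which the $u$-nodes keep their order $x_1,\ldots,x_m$, the $v$-nodes keep their order $y_1,\ldots,y_n$, and block $v_l$ sits just before $x_{c_l}$. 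This holds because each block is grafted to the right of the initial children of $f(v_l)$, so its nodes are traversed after those children but before $f(v_l)$, and because $f$ order-preserving gives $c_1<\cdots<c_s$ while the blocks are consecutive intervals of the post-order of $v$. I would prove it by induction on $s$ directly from the recursive definition of the traversal; this is the only place where grafting and traversal interact, and isolating it keeps the rest routine.

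Granting the shuffle description, Part (1) is almost immediate. An initial segment of a shuffle of two linearly ordered sets restricts to an initial segment of each, so the first $k$ nodes of $u\hash{}v$ are exactly $\{x_1,\ldots,x_i\}\cup\{y_1,\ldots,y_j\}$ for a unique pair $(i,j)\in[m]_0\times[n]_0$ with $i+j=k$. Thus $(u\hash{}v)_{[1,k]}$ is the convex tree on those nodes; restricting to the surviving $u$-nodes yields $u_{[1,i]}$ and to the surviving $v$-nodes yields $v_{[1,j]}$. I would then check that each block of $v_{[1,j]}$ stays grafted at its node $x_{c_l}$ when $c_l\le i$, while the at most one block whose node was deleted re-attaches, after edge contraction, to the root of $u_{[1,i]}$ (all ancestors of a deleted node have larger index, hence are deleted too); this yields an order-preserving map into $N(u_{[1,i]})$ with the rightmost-child condition inherited, so $(u\hash{}v)_{[1,k]}=u_{[1,i]}\hash{}v_{[1,j]}$. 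The symmetric analysis on the complementary nodes $\{x_{i+1},\ldots,x_m\}\cup\{y_{j+1},\ldots,y_n\}$, where every surviving block is grafted at a surviving node and so none is orphaned, gives $(u\hash{}v)_{[k+1,m+n]}=u_{[i+1,m]}\hash{}v_{[j+1,n]}$.

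For Part (2) I would reverse this bookkeeping. Starting from $(i,j)$ and two prescribed hash products $u_{[1,i]}\hash{P^-,f^-}v_{[1,j]}$ and $u_{[i+1,m]}\hash{P^+,f^+}v_{[j+1,n]}$, I reassemble a partition $P$ of $v$ and a map $f\in I(P,u)$ as follows: blocks carried by non-root nodes of $u_{[1,i]}$ or by nodes of $u_{[i+1,m]}$ are sent to the corresponding nodes among $x_1,\ldots,x_m$ of $u$; a block sitting at the root of $u_{[1,i]}$ is sent to the first surviving node $x_{i+1}$, and if the first block of $P^+$ also sits there the two consecutive $v$-blocks are merged into one convex block of $v$ straddling the cut. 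Monotonicity of $f^-$ and $f^+$, together with $i<i+1$, makes $f$ order-preserving, and by the shuffle description cutting $u\hash{P,f}v$ at position $k=i+j$ returns exactly the two prescribed factors.

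The main obstacle is the convex-tree bookkeeping inside Part (1): verifying that deleting a post-order suffix (or prefix) of a hash product and contracting edges again produces a hash product, with the grafting map still order-preserving and still satisfying the rightmost-child condition. The delicate points are the single block that the cut may split between the two tensor factors and the block whose grafting node is deleted and must be re-attached to the root; the shuffle description of the first step is precisely what pins down where each block lands and shows that these are the only cases that can occur.
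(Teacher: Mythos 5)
Your proof is correct and follows essentially the same route as the paper's: for part (1) you restrict the partition and the grafting map to the two tensor factors, re-attaching the single possibly orphaned block to the root of $u_{[1,i]}$, and for part (2) you use the same three-case reassembly, including merging the two blocks that straddle the cut. The explicit shuffle description of the post-order of a hash product is left implicit in the paper, but it is precisely the fact its construction relies on, so making it a separate first step changes the presentation rather than the argument.
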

\begin{proof}
(1) Let $P$ be the partition and $f\in I(P,u)$ that define $u\hash{}v$, let $x=(u\hash{}v)_{[1,k]}$, and let $y=(u\hash{}v)_{[k+1,n+m]}$. We set $i$ and $j$ be the number of nodes of $u$ and $v$, respectively, that appear in $x$, that is, $i=|N(x_{N(u)})|$ and $j=|N(x_{N(v)})|$. Let $P_1=\{t_{N(x)}\mid t\in P\}\backslash\{\treeroot\}$ and $P_2=\{t_{N(y)}\mid t\in P\}\backslash\{\treeroot\}$. The sets $P_1$ and $P_2$ form partitions of $v_{[1,j]}$ and $v_{[j+1,n]}$ respectively. Now, we need to define $f_1\in I(P_1,u_{[1,i]})$ and $f_2\in I(P_2,u_{[i+1,m]})$. Denote by $a$ be the last node of $x$ respect to the order of $N^*(x)$. We define $f_1(t_{N(x)})=\treeroot$ if $a\in N(t_{N(x)})\cap N(v)$ and $f_1(t_{N(x)})=f(t)$ otherwise, and $f_2(t_{N(y)})=f(t)$. So, by construction, we have $x\otimes y=(u_{[1,i]}\hash{P_1,f_1}v_{[1,j]})\otimes(u_{[i+1,m]}\hash{P_2,f_2}v_{[j+1,n]})$.

(2) Let $P_1$ and $f_1$ the respective partition and map that define $u_{[1,i]}\hash{}v_{[1,j]}$, and let $P_2$ and $f_2$ the respective partition and map that define $u_{[i+1,m]}\hash{}v_{[j+1,n]}$. Denote by $w_1$ the last tree in $P_1$ and let $w_2$ be the first tree in $P_2$. To construct $u\hash{}v$ we need to define a partition $P$ of $v$ and a map $f\in I(P,u)$. For this, we distinguish three cases.

(a) If $f_1(w_1)\neq\text{root}$, we set $P=P_1\cup P_2$ and $f(t)=f_i(t)$ if $t\in P_i$ for $i\in\{1,2\}$.

(b) If $f_1(w_1)=\text{root}$ and $f_2(w_2)\neq b$, where $b$ is the first node of $u_{[i+1,m]}$, we set $P=P_1\cup P_2$ with $f(t)=f_1(t)$ if $t\in P_1\backslash\{w_1\}$, $f(w_1)=b$ and $f(t)=f_2(t)$ if $t\in P_2$.

(c) If $f_1(w_1)=\text{root}$ and $f_2(w_2)=a$, we set $P=P_1\backslash\{w_1\}\cup\{w\}\cup P_2\backslash\{w_2\}$, where $w$ is the convex tree of $v$ formed by the non trivial nodes of $w_1$ and $w_2$, with $f(t)=f_1(t)$ if $t\in P_1\backslash\{w_1\}$, $f(w)=b$ and $f(t)=f_2(t)$ if $P_2\backslash\{w_2\}$.

By construction, for each case, we obtain that required equality.
\end{proof}

\begin{thm}\label{009}
The algebra $(\field[\T_{\infty}],\tprod)$ with the coproduct $\Delta$ is a Hopf algebra.
\end{thm}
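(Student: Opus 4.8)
The goal is to show that $(\field[\T_{\infty}],\tprod,\Delta)$ is a Hopf algebra. The plan is to verify the bialgebra axioms first and then invoke connectedness and gradedness to obtain the antipode for free. By Proposition \ref{003} we already know $(\field[\T_{\infty}],\tprod)$ is an associative algebra with unit $\treeroot$, and the coproduct $\Delta$ together with the counit $\epsilon$ makes $\field[\T_{\infty}]$ a coassociative counital coalgebra (this is part of the infinitesimal structure recalled in Proposition \ref{010}, since coassociativity of $\Delta$ does not depend on which product we pair it with). The single substantive thing to check is therefore that $\Delta$ is an algebra morphism, i.e. the compatibility
\[
\Delta(u\tprod v)=\Delta(u)\tprod\Delta(v),
\]
where the product on the right is the componentwise product on $\field[\T_{\infty}]\otimes\field[\T_{\infty}]$, together with the easy counit compatibility $\epsilon(u\tprod v)=\epsilon(u)\epsilon(v)$ and $\Delta(\treeroot)=\treeroot\otimes\treeroot$. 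The counit statement is immediate from degrees, since $\tprod$ is graded of degree $|u|+|v|$.

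First I would expand the left-hand side using the definitions. By definition of $\tprod$, $u\tprod v=\sum_{P,f}u\hash{P,f}v$ summed over partitions $P\in\PP(v)$ and maps $f\in I(P,u)$, and then applying $\Delta$ splits each hash product at every cut point $k\in[m+n]_0$ via
\[
\Delta(u\hash{}v)=\sum_{k=0}^{m+n}(u\hash{}v)_{[1,k]}\otimes(u\hash{}v)_{[k+1,m+n]}.
\]
The key tool here is Lemma \ref{025}(1), which rewrites each such cut of a hash product as a tensor $(u_{[1,i]}\hash{}v_{[1,j]})\otimes(u_{[i+1,m]}\hash{}v_{[j+1,n]})$ for a uniquely determined $(i,j)\in[m]_0\times[n]_0$. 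This says precisely that every term appearing in $\Delta(u\tprod v)$ is of the form dictated by the right-hand side, namely a product of a hash product of initial segments with a hash product of terminal segments.

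For the right-hand side, I would expand $\Delta(u)=\sum_i u_{[1,i]}\otimes u_{[i+1,m]}$ and $\Delta(v)=\sum_j v_{[1,j]}\otimes v_{[j+1,n]}$, so that
\[
\Delta(u)\tprod\Delta(v)=\sum_{i,j}\bigl(u_{[1,i]}\tprod v_{[1,j]}\bigr)\otimes\bigl(u_{[i+1,m]}\tprod v_{[j+1,n]}\bigr),
\]
and each factor expands again as a sum over its own partitions and order preserving maps. Lemma \ref{025}(2) provides the reverse direction: given any pair of hash products on the initial and terminal segments, it produces a single hash product $u\hash{}v$ whose cut at $k=i+j$ reproduces that tensor. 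The heart of the argument is thus to combine the two parts of Lemma \ref{025} into a bijection between the index data $(k,P,f)$ parametrizing the terms of $\Delta(u\tprod v)$ and the index data $\bigl((i,P_1,f_1),(j,P_2,f_2)\bigr)$ parametrizing the terms of $\Delta(u)\tprod\Delta(v)$, under which matched terms are equal.

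The main obstacle is verifying that this correspondence is genuinely a bijection, not merely a surjection in each direction, since collisions would spoil the equality of the two sums coefficient by coefficient. The delicate point lies in case (c) of Lemma \ref{025}(2), where the last block $w_1$ of $P_1$ and the first block $w_2$ of $P_2$ are glued back into a single convex block $w$ of the reassembled partition $P$: one must confirm that the three cases (a), (b), (c) are mutually exclusive and jointly exhaustive, so that reconstructing $(P,f)$ from the pair of pieces is single-valued, and that the cut point $k=i+j$ is recovered unambiguously. Once this bijection is established, the two expansions agree term by term and the compatibility follows. Finally, since $\field[\T_{\infty}]=\bigoplus_{n\geq0}\field[\T_n]$ is a graded bialgebra that is connected (the degree-zero part is $\field\treeroot$), it is automatically a Hopf algebra, with the antipode defined recursively by the standard formula on connected graded bialgebras. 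This completes the proof.
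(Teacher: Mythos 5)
Your proposal follows essentially the same route as the paper: the paper's proof of Theorem \ref{009} consists precisely of invoking Lemma \ref{025} to obtain $\Delta(u\tprod v)=\Delta(u)\tprod\Delta(v)$ and concluding. You are in fact more explicit than the paper about the two points it leaves implicit --- that the correspondence from the two halves of Lemma \ref{025} must be a genuine bijection on index data (not just mutually surjective), and that the antipode comes for free from connectedness and gradedness --- but these are elaborations of the same argument, not a different one.
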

\begin{proof}
As a consequence of Lemma \ref{025} we obtain $\Delta(u\tprod v)=\Delta(u)\tprod\Delta(v)$ for all $u,v\in\T_{\infty}$. Therefore $(\field[\T_{\infty}],\tprod,\Delta)$ is a Hopf algebra.
\end{proof}

\begin{rem}\label{012}
Theorem \ref{009} and Proposition \ref{010} imply that $(\field[\T_{\infty}],*,\cdot,\Delta)$ is a $2$-associative Hopf algebra in the sense of \cite{LoRo06}.
\end{rem}

\subsection{Primitive elements}

In general, recall that for a bialgebra $H$ with coproduct $\Delta$ and counit $\epsilon$, its \emph{reduced coproduct} is the map $\bar{\Delta}:\bar{H}\to\bar{H}\otimes\bar{H}$ defined by $\bar{\Delta}(x)=\Delta(x)-1\otimes x-x\otimes1$ for all $x\in\bar{H}$, where $\bar{H}=\ker(\epsilon)$. An element $x\in H$ is \emph{primitive} if $\bar{\Delta}(x)=0$. We will denote by $\Prim(H)$ the space of all primitive elements of $H$. Since $\bar{\Delta}$ is coassociative, we can inductively define the following $(n+1)$--ary cooperation\[\bar{\Delta}^{(n)}:\bar{H}\to\bar{H}^{\otimes (n+1)},\qquad\bar{\Delta}^{(n)}=\left\{\begin{array}{ll}
\id&\text{if }n=0\\
\bar{\Delta}&\text{if }n=1\\
\bar{\Delta}^{(n)}=(\bar{\Delta}\otimes\id\otimes\cdots\otimes\id)\circ\bar{\Delta}^{(n-1)}&\text{if }n>1
\end{array}\right.\]

Set $H=\field[\T_{\infty}]$ the $2$-associative Hopf algebra described in Remark \ref{012}. Note that $\bar{H}=\bigoplus_{n\geq1}\field[\T_n]$. By applying the results obtained in \cite{LoRo06}, the primitive part of $H$, regarded as an infinitesimal bialgebra, was studied in \cite{Mq18}.

Since $H$ is a connected infinitesimal bialgebra, \cite[Proposition 2.5]{LoRo06} implies that there is an idempotent linear operator $e:\bar{H}\to\bar{H}$ defined by $e=\sum_{n\geq0}(-1)^n\cdot^n\circ\,\bar{\Delta}^{(n)}$ satisfying $e(\bar{H})=\Prim(H)$ and $e(x\cdot y)=0$ for all $x,y\in\bar{H}$. As a consequence, we obtain that the collection of $e(t)$ with $t$ an irreducible tree is a basis of $\Prim(H)$. In particular, the dimension of the $n$th component of $\Prim(H)$ is $c_{n-1}$. See \cite{Mq18} for more details.

\subsection{Labelled trees}\label{017}

Given a set $X$, an \emph{$X$--labelled tree} or simply a \emph{labelled tree} is a tree in which its non root nodes are labelled by elements of $X$. Note that the number of $X$--labelled trees of degree $n$ is $|X|^nc_n$ whenever $X$ is a finite set. We will denote by $\T_n(X)$ the set of all $X$--labelled trees of degree $n$, and we set\[\T(X)=\bigsqcup_{n\geq0}\T_n(X).\]

The structure of trees on $K[\T_{\infty}]$ induces a $2$-associative Hopf algebra structure on the $K$--vector space spanned by the set of all labelled trees $\T(X)$, that is, the product and coproduct are obtained by operating the underlying structures of the trees and keeping the labels of the nodes. Note that $\T_{\infty}$ may be regarded as the set of $\emptyset$--labelled trees, so we will keep the notation $*$ and $\Delta$ for the product and coproduct of labelled trees. For instance, below the product of two labelled trees of degrees $2$ and $3$ respectively.\begin{figure}[H]$\figureeig{#1}$\centering\caption{Product of labelled trees.}\end{figure}Similarly, the coproduct of a labelled tree of degree $6$.\begin{figure}[H]$\figurenin{#1}$\centering\caption{Coproduct of a labelled tree.}\end{figure}

In the same way as in $\field[\T_{\infty}]$, the primitive part of $\field[\T(X)]$ is spanned by the collection of $e(t)$ with $t$ an irreducible labelled tree. In particular, the dimension of the $n$th component of $\Prim(\field[\T(X)])$ is $|X|^nc_{n-1}$ whenever $X$ is finite.

\section{Labelled $n$--trees}\label{031}

An \emph{$n$--tree} is a $[n]$--labelled tree of degree $n$ whose labels admit no repetition. For every nonnegative $n$, the number of $n$--trees is $n!c_n$. We will denote by $\T[n]$ the set of all $n$--trees, and we set\[\T[\infty]=\bigsqcup_{n\geq0}\T[n].\]

Given a $\N$--labelled tree $t$ of degree $n$, we denote by $\s(t)$ the \emph{standardization} of $t$ defined by an order preserving relabelling of its nodes such that it is a $n$--tree. For instance, below the standardization of an $\N$--labelled tree of degree $5$:\begin{figure}[H]$\figuretwe{#1}$\centering\caption{Standardization of a tree.}\end{figure}

The coproduct of an $n$--tree $t$, denoted by $\Delta_{\s}(t)$, is defined as the sum obtained by standardizing the terms of each tensor in $\Delta(t)$, that is\[\Delta_{\s}(t):=\sum_{k=0}^n\s(t_{[1,k]})\otimes\s(t_{[k+1,n]})\]By using Sweedler's notation, we have $\Delta_{\s}(t)=\s(t_{(1)})\otimes\s(t_{(2)})$. Note that $\Delta_{\s}=(\s\otimes\s)\circ\Delta$. For instance, below the coproduct of a $6$--tree.\begin{figure}[H]$\figuretenthr{#1}$\centering\caption{Coproduct of a $6$--tree.}\end{figure}

\begin{pro}\label{013}
The pair $(\field [\T[\infty]], \Delta_{\s})$ is a coalgebra.
\end{pro}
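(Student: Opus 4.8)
The plan is to verify the two coalgebra axioms for $\Delta_{\s}$, taking as counit the same map $\epsilon$ used on $\field[\T_\infty]$, namely $\epsilon(t)=1$ if $|t|=0$ and $\epsilon(t)=0$ otherwise. The whole argument rests on the factorization $\Delta_{\s}=(\s\otimes\s)\circ\Delta$ already recorded above, together with the coassociativity of the underlying coproduct $\Delta$, which holds on labelled trees by Proposition \ref{010} and its extension in Section \ref{017}. The idea is to push every standardization to the outside, so that coassociativity of $\Delta_{\s}$ becomes a formal consequence of coassociativity of $\Delta$.

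The key step is a compatibility lemma: for every $\N$--labelled tree $t$ of degree $n$ and every $k\in[n]_0$,
\[
\s\bigl((\s(t))_{[1,k]}\bigr)=\s\bigl(t_{[1,k]}\bigr)\qquad\text{and}\qquad \s\bigl((\s(t))_{[k+1,n]}\bigr)=\s\bigl(t_{[k+1,n]}\bigr),
\]
that is, $(\s\otimes\s)\circ\Delta\circ\s=(\s\otimes\s)\circ\Delta$, equivalently $\Delta_{\s}\circ\s=\Delta_{\s}$. To prove this I would observe that standardization alters neither the underlying planar tree of $t$ nor the position of any node in the depth-first post order traverse, and that it relabels the nodes in a strictly order-preserving way. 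Hence $(\s(t))_{[1,k]}$ and $t_{[1,k]}$ have the same underlying convex tree, and their label sets are order-isomorphic (both being order-isomorphic to the first $k$ positions of the traverse); since $\s$ depends only on the underlying tree and on the relative order of the labels, the two standardizations coincide. The same reasoning handles the complementary interval.

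Granting the lemma, coassociativity follows formally. Writing $\s^{\otimes 3}=\s\otimes\s\otimes\s$ and using $\Delta_{\s}\circ\s=(\s\otimes\s)\circ\Delta$, one computes
\[
(\Delta_{\s}\otimes\id)\circ\Delta_{\s}=(\s^{\otimes 3})\circ(\Delta\otimes\id)\circ\Delta,\qquad (\id\otimes\Delta_{\s})\circ\Delta_{\s}=(\s^{\otimes 3})\circ(\id\otimes\Delta)\circ\Delta.
\]
Concretely both right-hand sides expand as $\sum_{0\le j\le k\le n}\s(t_{[1,j]})\otimes\s(t_{[j+1,k]})\otimes\s(t_{[k+1,n]})$, using that a convex subtree of a convex subtree satisfies $(t_{[1,k]})_{[1,j]}=t_{[1,j]}$ and $(t_{[1,k]})_{[j+1,k]}=t_{[j+1,k]}$. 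Since $\Delta$ is coassociative, $(\Delta\otimes\id)\circ\Delta=(\id\otimes\Delta)\circ\Delta$, so the two expressions agree and $\Delta_{\s}$ is coassociative. For the counit, applying $\epsilon\otimes\id$ to $\Delta_{\s}(t)=\sum_{k}\s(t_{[1,k]})\otimes\s(t_{[k+1,n]})$ kills every term except $k=0$, where the left factor is the degree-zero tree and the right factor is $\s(t_{[1,n]})=\s(t)=t$ because an $n$--tree is already standardized; thus $(\epsilon\otimes\id)\circ\Delta_{\s}=\id$, and symmetrically $(\id\otimes\epsilon)\circ\Delta_{\s}=\id$.

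I expect the main obstacle to be a fully rigorous proof of the compatibility lemma, i.e. making precise that taking a convex subtree and standardizing commute. The subtlety is purely bookkeeping about labels: one must check that the order-preserving relabelling $\s$ restricts correctly to each interval of the traverse, and that deleting nodes and contracting edges is insensitive to the particular labels chosen, depending only on their relative order. Once this is settled, every remaining equality is a formal consequence of the coassociativity of $\Delta$.
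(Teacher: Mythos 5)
Your proposal is correct and follows essentially the same route as the paper: the paper's proof also rests on the identity $\s(\s(w)_A)=\s(w_A)$ (your compatibility lemma, stated there for arbitrary $A\subseteq N(w)$), pushes the standardizations outward to write both iterated coproducts as $(\s\otimes\s\otimes\s)$ applied to the iterated $\Delta$, and concludes by coassociativity of $\Delta$. Your added justification of the compatibility lemma and the explicit counit check are harmless supplements to what the paper leaves implicit.
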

\begin{proof}
 Since $\Delta$ is a coproduct and $\s(\s(w)_A)=\s(w_A)$ for all $\N$--labelled tree $w$ and for all $A\subseteq N(w)$, then, for every $m$--tree $t$, we have\[\begin{array}{rcl}
(\id\otimes\Delta_{\s})(\Delta_{\s}(t))&=&(\id\otimes\Delta_{\s})(\s(t_{(1)})\otimes\s(t_{(2)})),\\
&=&\s(t_{(1)})\otimes\s(\s(t_{(2)})_{(1)})\otimes\s(\s(t_{(2)})_{(2)}),\\
&=&\s(t_{(1)})\otimes\s((t_{(2)})_{(1)})\otimes\s((t_{(2)})_{(2)}),\\
%&=&(\s\otimes\s\otimes\s)(t_{(1)}\otimes(t_{(2)})_{(1)}\otimes(t_{(2)})_{(2)}),\\
&=&(\s\otimes\s\otimes\s)(\id\otimes\Delta)(\Delta(t)),\\
&=&(\s\otimes\s\otimes\s)(\Delta\otimes\id)(\Delta(t)),\\
%&=&(\s\otimes\s\otimes\s)((t_{(1)})_{(1)}\otimes(t_{(1)})_{(2)}\otimes t_{(2)}),\\
&=&\s((t_{(1)})_{(1)})\otimes\s((t_{(1)})_{(2)})\otimes\s(t_{(2)}),\\
&=&\s(\s(t_{(1)})_{(1)})\otimes\s(\s(t_{(1)})_{(2)})\otimes\s(t_{(2)}),\\
&=&(\Delta_{\s}\otimes\id)(\s(t_{(1)})\otimes\s(t_{(2)})),\\
&=&(\Delta_{\s}\otimes\id)(\Delta_{\s}(t)).
\end{array}\]Therefore $\field [\T[\infty]]$ is a coalgebra.
\end{proof}

For a positive integer $m$, the \emph{$m$--shift} or simply the \emph{shift} of an $\N$--labelled tree $t$, denoted by $t[m]$, is the $\N$--labelled tree obtained by adding $m$ to the labels of the non root nodes of $t$. For instance, below the $4$--shift of an $\N$--labelled tree.\begin{figure}[H]$\figureten{#1}$\centering\caption{$4$--shift of a tree.}\end{figure}Note that $\s(t)=\s(t[k])$ for all $\N$--labelled tree $t$ and for all $k\in\N$. Furthermore if $|t|=m$ and $w$ is another $\N$--labelled tree such that the labels of $t$ are smaller than the labels of $w$, then\begin{equation}\label{015}\s(t\cdot w)=\s(t)\cdot \s(w)[m]\quad\text{and}\quad\s(t*w)=\s(t)*\s(w)[m].\end{equation}

We may adapt the dot product, via the shift operation, in order to obtain an infinitesimal structure on the bialgebra $K[\T[\infty]]$ and its respective subspaces. More specifically, for trees $t,w\in\T[\infty]$ with $|t|=m$, we consider the product $t/w:=t\cdot w[m]$ for all $t,w\in\T[\infty]$. For instance, we have\begin{figure}[H]$\figuretensix{#1}$\centering\caption{Standardized dot product of trees.}\end{figure}We say that $t\in\T[\infty]$ is \emph{reducible}, respect to the product $/$, if there are non trivial trees $u,v\in\T[\infty]$ such that $t=u/v$, otherwise $t$ is called irreducible. Note that irreducible trees of $\field[\T[\infty]]$ are not necessarily irreducible in $\field[\T(\N)]$. For instance, the tree below is an irreducible $3$--tree but no an $\N$-labelled irreducible tree.\begin{figure}[H]$\figuretensev{#1}$\centering\caption{Irreducible $3$--tree.}\end{figure}Furthermore, every non trivial $n$-tree $t$ can be uniquely written as $t=t_1/\cdots/t_k$, where $t_1,\ldots,t_k$ are irreducible.

\begin{pro}\label{011}
The tuple $(\field[\T[\infty]],/,\Delta_{\s})$ is a connected infinitesimal bialgebra. In consequence, $\field[\T[\infty]]$ is cofree.
\end{pro}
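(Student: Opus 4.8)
The plan is to verify, in order, that $/$ makes $\field[\T[\infty]]$ an associative unital algebra, that $\Delta_{\s}$ is compatible with $/$ in the infinitesimal sense, and that the resulting bialgebra is connected; cofreeness then follows from Theorem \ref{026}. The coalgebra axioms are already in hand from Proposition \ref{013}.

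First I would dispose of the algebra structure. The shift is additive, $(w[n])[m]=w[n+m]$, and distributes over the dot product, $(w\cdot z)[m]=w[m]\cdot z[m]$; combined with associativity of $\cdot$, this gives for $t,u,w$ with $|t|=m$, $|u|=n$ that both $(t/u)/w$ and $t/(u/w)$ equal $t\cdot u[m]\cdot w[m+n]$, so $/$ is associative. Since $w[0]=w$ and $\treeroot$ is the unit of $\cdot$, we get $\treeroot/w=w=w/\treeroot$, so $\treeroot$ is a two-sided unit.

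The main step is the infinitesimal identity
\[\Delta_{\s}(t/u)=\Delta_{\s}(t)(1\otimes u)+(t\otimes1)\Delta_{\s}(u)-t\otimes u,\]
with the right-hand products taken componentwise with $/$ in $\field[\T[\infty]]\otimes\field[\T[\infty]]$. I would deduce it from the infinitesimal compatibility of $\Delta$ with $\cdot$, which holds on $\N$--labelled trees since both operations act on the underlying tree and merely carry labels, so Proposition \ref{010} applies verbatim. Writing $\Delta_{\s}=(\s\otimes\s)\circ\Delta$ and $t/u=t\cdot u[m]$ with $m=|t|$, I apply the $(\cdot,\Delta)$ identity to $t\cdot u[m]$ and push $(\s\otimes\s)$ through the three terms. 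In the first term, $\Delta(t)(1\otimes u[m])=\sum_k t_{[1,k]}\otimes(t_{[k+1,m]}\cdot u[m])$; the labels of $t_{[k+1,m]}$ are at most $m$ while those of $u[m]$ exceed $m$, so Equation \ref{015} gives $\s(t_{[k+1,m]}\cdot u[m])=\s(t_{[k+1,m]})/u$, whence this term becomes $\Delta_{\s}(t)(1\otimes u)$. In the second term I use $(u[m])_{[1,l]}=u_{[1,l]}[m]$ and $(u[m])_{[l+1,n]}=u_{[l+1,n]}[m]$, and again Equation \ref{015} yields $\s(t\cdot u_{[1,l]}[m])=t/\s(u_{[1,l]})$ together with $\s(u_{[l+1,n]}[m])=\s(u_{[l+1,n]})$, so this term becomes $(t\otimes1)\Delta_{\s}(u)$. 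The third term standardizes to $-t\otimes u$. Summing reproduces the claimed identity.

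Finally, $\field[\T[\infty]]$ is graded by degree, $/$ adds degrees and $\Delta_{\s}$ preserves total degree, and $\field[\T[0]]=\field\,\treeroot$ is one-dimensional, so the infinitesimal bialgebra is connected. Theorem \ref{026} then identifies it with $T(\Prim(\field[\T[\infty]]))$ under deconcatenation, which is cofree among connected coalgebras, giving the last assertion. I expect the bookkeeping in the compatibility identity to be the only real obstacle, above all the two conversions of standardized mixed dot products into $/$--products with the right degrees via Equation \ref{015}, and the observation that the convex trees of a shift $u[m]$ are the shifts of the convex trees of $u$.
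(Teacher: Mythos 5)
Your proposal is correct and follows essentially the same route as the paper: both reduce the infinitesimal identity for $(/,\Delta_{\s})$ to the known identity for $(\cdot,\Delta)$ on labelled trees, then push $\s\otimes\s$ through the three terms using Equation \ref{015} and the fact that $\Delta(u[m])=u_{(1)}[m]\otimes u_{(2)}[m]$, before invoking Theorem \ref{026} for cofreeness. The only difference is that you also spell out associativity and unitality of $/$ and the connectedness of the grading, which the paper leaves implicit.
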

\begin{proof}
We must verify the infinitesimal condition of $/$ with $\Delta_{\s}$. Let $t,w\in \T[\infty]$ with $|t|=m$. Indeed, Equation \ref{015} and the fact that $\Delta(w[m])=w_{(1)}[m]\otimes w_{(2)}[m]$ for all $\N$--labelled tree $t$ and for all $k\in\N$, imply that\[\begin{array}{rcl}
\Delta_{\s}(t/w)&=&(\s\otimes\s)(\Delta(t\cdot w[m])),\\
&=&(\s\otimes\s)(\Delta(t)\cdot w[m]+t\cdot\Delta(w[m])-t\otimes w[m]),\\
&=&(\s\otimes\s)(t_{(1)}\otimes t_{(2)}\cdot w[m]+t\cdot w_{(1)}[m]\otimes w_{(2)}[m]-t\otimes w[m]),\\
&=&\s(t_{(1)})\otimes \s(t_{(2)})\cdot w[|t_{(2)}|]+t\cdot \s(w_{(1)})[m]\otimes \s(w_{(2)})-t\otimes w,\\
&=&\s(t_{(1)})\otimes \s(t_{(2)})/ w+t/\s(w_{(1)})\otimes \s(w_{(2)})-t\otimes w,\\
&=&\Delta_{\s}(t)/ w+t/\Delta_{\s}(w)-t\otimes w.\\
\end{array}\]
Furthermore, Proposition \ref{026} implies that $\field[\T[\infty]]$ and $T(\Prim(\field[\T[\infty]]))$ are isomorphic as infinitesimal bialgebras, therefore $\field[\T[\infty]]$ is cofree. 
\end{proof}

In what follows we gives $\field[\T[\infty]]$ with a structure of $2$-associative Hopf algebra.

The \emph{star product} of two trees $t,w\in\T[\infty]$, denoted by $t\star w$, is defined by multiplying $t$ with the $m$--shift of $w$ as $\N$--labelled trees, where $m$ is the degree of $t$, that is, $t\star w=t*w[m]$. By extending linearly the $m$--shift operation, we obtain $(t*w)[m]=t[m]*w[m]$ for all $\N$--labelled trees $t,w$. This and the fact that $*$ is associative imply that $\star$ is associative as well, indeed if $t,u,w\in\T[\infty]$ with $|t|=m$ and $|u|=n$, then \begin{equation}\label{014}(t\star u)\star w=(t*u[m])*w[m+n]=t*(u[m]*w[n][m])=t*(u*w[n])[m]=t*(u\star w)[m]=t\star(u\star w).\end{equation}Below the star product of a $2$--tree with a $3$--tree.\begin{figure}[H]$\figureele{#1}$\centering\caption{Star product of two trees.}\end{figure}

\begin{thm}\label{020}
The tuple $(\field[\T[\infty]],\star,\Delta_{\s})$ is a graded Hopf algebra.
\end{thm}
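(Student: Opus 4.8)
The plan is to prove that $(\field[\T[\infty]],\star,\Delta_{\s})$ is a \emph{connected graded bialgebra}, from which the existence of an antipode---and hence the Hopf structure---follows by the standard fact that a connected graded bialgebra is automatically a Hopf algebra. First I would dispose of the routine axioms. Associativity of $\star$ is Equation \ref{014}, and $\treeroot$ is a two-sided unit, since $\treeroot\star w=\treeroot*w[0]=w$ and $t\star\treeroot=t*\treeroot[m]=t*\treeroot=t$; the coalgebra axioms are exactly Proposition \ref{013}. Grading is immediate: the star product of an $m$--tree with an $n$--tree is an $(m+n)$--tree, because $t*w[m]$ has labels $[m+n]$ with no repetition, and $\Delta_{\s}$ carries $\field[\T[n]]$ into $\bigoplus_{i+j=n}\field[\T[i]]\otimes\field[\T[j]]$ since $|t_{[1,k]}|=k$ and standardization preserves degree. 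As $\treeroot$ is the only tree of degree $0$, the bialgebra is connected.

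The heart of the argument is the multiplicativity of the coproduct, $\Delta_{\s}(t\star w)=\Delta_{\s}(t)\star\Delta_{\s}(w)$, which parallels Theorem \ref{009}. I would compute directly, using $\Delta_{\s}=(\s\otimes\s)\circ\Delta$, the definition $t\star w=t*w[m]$ with $m=|t|$, and the labelled version of Theorem \ref{009} recalled in Section \ref{017}, which gives $\Delta(t*w[m])=\Delta(t)*\Delta(w[m])$ in $\field[\T(\N)]$. Combining this with the shift identity $\Delta(w[m])=w_{(1)}[m]\otimes w_{(2)}[m]$ from the proof of Proposition \ref{011} and the multiplicativity of $*$ on tensors yields
\[\Delta_{\s}(t\star w)=\s\bigl(t_{(1)}*w_{(1)}[m]\bigr)\otimes\s\bigl(t_{(2)}*w_{(2)}[m]\bigr),\]
while on the other side $\Delta_{\s}(t)\star\Delta_{\s}(w)=\bigl(\s(t_{(1)})\star\s(w_{(1)})\bigr)\otimes\bigl(\s(t_{(2)})\star\s(w_{(2)})\bigr)$.

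It then remains to match these two expressions factor by factor, and this is the step I expect to require the most care. The key observation is that in each tree $t_{(\ell)}*w_{(\ell)}[m]$ the labels inherited from $t_{(\ell)}$ lie in $[m]$ while those inherited from $w_{(\ell)}[m]$ are all strictly larger than $m$, so the hypothesis of Equation \ref{015} is satisfied and one may write $\s(t_{(\ell)}*w_{(\ell)}[m])=\s(t_{(\ell)})*\s(w_{(\ell)}[m])[|t_{(\ell)}|]$. Since $\s(w_{(\ell)}[m])=\s(w_{(\ell)})$, this equals $\s(t_{(\ell)})*\s(w_{(\ell)})[|t_{(\ell)}|]=\s(t_{(\ell)})\star\s(w_{(\ell)})$ for $\ell\in\{1,2\}$, which is precisely the corresponding factor on the other side. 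The only genuine subtlety is the bookkeeping of which labels land in which Sweedler component after cutting, and confirming that the ``smaller labels'' condition of Equation \ref{015} holds on both components; once this is verified the compatibility follows. Having thus established a connected graded bialgebra, the standard antipode argument completes the proof that $(\field[\T[\infty]],\star,\Delta_{\s})$ is a graded Hopf algebra.
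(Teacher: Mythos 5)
Your proposal is correct and follows essentially the same route as the paper: the key step in both is showing $\Delta_{\s}$ is multiplicative by writing $\Delta_{\s}=(\s\otimes\s)\circ\Delta$, invoking multiplicativity of $\Delta$ for labelled trees, the shift identity $\Delta(w[m])=w_{(1)}[m]\otimes w_{(2)}[m]$, and Equation \ref{015} to push the standardization through each tensor factor. Your additional attention to the unit, grading, connectedness, and the antipode makes explicit what the paper leaves implicit, but the substance is the same.
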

\begin{proof}
The pair $(\field[\T[\infty]],\star)$ is an algebra because of Equation \ref{014}. Because of Proposition \ref{013}, it is enough to verify that $\Delta_s$ is an algebra homomorphism. Indeed, since $\Delta(w[m])=w_{(1)}[m]\otimes w_{(2)}[m]$ holds for all $\N$--labelled tree $t$ and for all $m\in\N$, Equation \ref{015} implies the following\[\begin{array}{rcl}
\Delta_{\s}(t\star w)&=&\Delta_{\s}(t*w[m]),\\
%&=&(\s\otimes\s)(\Delta(t*w[m])),\\
&=&(\s\otimes\s)(\Delta(t)*\Delta(w[m])),\\
&=&(\s\otimes\s)((t_{(1)}\otimes t_{(2)})*(w_{(1)}[m]\otimes w_{(2)}[m])),\\
&=&(\s\otimes\s)((t_{(1)}*w_{(1)}[m])\otimes(t_{(2)}*w_{(2)}[m])),\\
%&=&\s(t_{(1)}*w_{(1)}[m])\otimes\s(t_{(2)}*w_{(2)}[m]),\\
%&=&(\s(t_{(1)})*\s(w_{(1)}[m])[m_1])\otimes(\s(t_{(2)})*\s(w_{(2)}[m])[m_2]),\\
&=&(\s(t_{(1)})*\s(w_{(1)})[m_1])\otimes(\s(t_{(2)})*\s(w_{(2)})[m_2]),\\
&=&(\s(t_{(1)})\star\s(w_{(1)}))\otimes(\s(t_{(2)})\star\s(w_{(2)})),\\
&=&(\s(t_{(1)})\otimes\s(t_{(2)}))\star(\s(w_{(1)})\otimes\s(w_{(2)})),\\
&=&\Delta_{\s}(t)\star\Delta_{\s}(w).
\end{array}\]Therefore $(\field[\T[\infty]],\star,\Delta_{\s})$ is a graded Hopf algebra.
\end{proof}

Proposition \ref{011} and Theorem \ref{020} imply that $(\field[\T[\infty]],\star,/,\Delta_{\s})$ is a $2$-associative Hopf algebra.

\subsection{Increasing trees}

An \emph{increasing tree} of degree $n$ is an $n$--tree in which every path of labels starting at the root is increasing. We will denote by $I[n]$ the set of all increasing trees of degree $n$, and we set\[I[\infty]:=\bigsqcup_{n\geq0}I[n].\]In the literature, the trees above are known as \emph{planar increasing trees}. 

The set of increasing trees of degree $n$ can be constructed inductively by gluing a node labelled by $n$ on the nodes of the increasing trees of degrees $n-1$. This implies that the number of increasing trees is the double factorial $(2n-1)!!=1\cdot3\cdots(2n-1)$ (see \cite{Ja08}).

\begin{pro}\label{016}
The vector space $\field[I[\infty]]$ is a graded Hopf subalgebra of $\field[\T[\infty]]$.
\end{pro}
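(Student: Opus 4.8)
The vector space $\field[I[\infty]]$ is a graded Hopf subalgebra of $\field[\T[\infty]]$.

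The plan is to show that $\field[I[\infty]]$ is closed under the three structure maps of $\field[\T[\infty]]$: the counit, the coproduct $\Delta_{\s}$, and the star product $\star$. Since $I[\infty]\subseteq\T[\infty]$ and the grading is inherited, it suffices to verify that these maps send increasing trees to linear combinations of increasing trees; the Hopf-algebra axioms are then automatic by restriction from Theorem \ref{020}. The counit is trivial, so the real content is closure under $\Delta_{\s}$ and under $\star$.

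For the coproduct, I would argue as follows. Let $t$ be an increasing $n$--tree, so every root-to-node path has strictly increasing labels. Each tensor in $\Delta_{\s}(t)$ has the form $\s(t_{[1,k]})\otimes\s(t_{[k+1,n]})$. The key observation is that the convex trees $t_{[1,k]}$ and $t_{[k+1,n]}$ are obtained by deleting nodes and contracting edges, and contraction only removes intermediate labels along a path while preserving the relative order of the surviving labels; hence each surviving root-to-node path remains strictly increasing. Standardization $\s$ is an order-preserving relabelling, so it also preserves the increasing property along paths. Therefore both tensor factors are again increasing trees, and $\Delta_{\s}(t)\in\field[I[\infty]]\otimes\field[I[\infty]]$.

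For the product, recall $t\star w=t*w[m]$ with $m=|t|$, and that $t*w[m]=\sum_{P,f}t\hash{P,f}(w[m])$. Here I expect the main obstacle to lie: I must check that every hash product $t\hash{P,f}(w[m])$ is increasing when $t$ and $w$ are. The shift $w[m]$ guarantees that every label of $w[m]$ strictly exceeds every label of $t$, so gluing a block $u_i$ of $w[m]$ onto a node $f(u_i)$ of $t$ only appends larger labels below existing ones along any path through $f(u_i)$; since $u_i$ is itself a convex tree of an increasing tree it carries increasing paths internally, and its top label $f(u_i)$-value exceeds all of $t$. Thus any root-to-node path in $t\hash{P,f}(w[m])$ first traverses increasing labels of $t$ and then strictly larger increasing labels coming from $w[m]$, remaining increasing throughout. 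Consequently each addend is increasing and $t\star w\in\field[I[\infty]]$, completing the proof that $\field[I[\infty]]$ is a graded Hopf subalgebra.
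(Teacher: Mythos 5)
Your proof is correct and follows essentially the same route as the paper's: the paper's (much terser) argument likewise reduces everything to the observations that shifts, convex trees, and standardizations of increasing trees remain increasing, from which closure under $\Delta_{\s}$ and $\star$ follows. You have simply spelled out those three facts in more detail, including the key point that the shift $w[m]$ forces all labels glued below nodes of $t$ to exceed every label of $t$.
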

\begin{proof}
This is a consequence of the fact that the shifts, the convex trees and the standardizations of planar increasing trees are trees with increasing paths, then $\field[I[\infty]]$ is closed under the star product and the coproduct of $\field[T[\infty]]$.
\end{proof}

\subsection{Sorted trees}\label{022}

A \emph{sorted tree} is an increasing tree in which the labels of the children of each node are ordered from left to right. These trees can be regarded as a planar representation of the non planar increasing trees also known as Cayley increasing trees or recursive trees (see \cite{St97}). We will denote by $SI[n]$ the set of all sorted trees of degree $n$, and we set\[SI[\infty]:=\bigsqcup_{n\geq0}SI[n].\]

The set of sorted trees of degree $n$ can be constructed inductively by gluing a node labelled by $n$ on the nodes of the sorted trees of degrees $n-1$ so that it is the rightmost node respect to its siblings. This implies that the number of sorted trees is $n!$ (see \cite{St97}).

Below the set of sorted trees of degree $3$:\begin{figure}[H]$\figuretenfou{#1}$\centering\caption{Sorted trees of degree $3$.}\end{figure}

By proceeding analogously as in Proposition \ref{016}, we obtain the following result.

\begin{pro}\label{024}
The vector space $\field[SI[\infty]]$ is a graded sub $2$-associative Hopf algebra of $\field[\T[\infty]]$.
\end{pro}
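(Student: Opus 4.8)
The plan is to mirror the proof of Proposition \ref{016}: I would verify that $\field[SI[\infty]]$ is closed under both products $\star$ and $/$ and under the coproduct $\Delta_{\s}$ of $\field[\T[\infty]]$, so that it inherits simultaneously the Hopf and the infinitesimal structures and is therefore a graded sub $2$-associative Hopf algebra (the grading is by degree, the unit $\treeroot$ is sorted, and the antipode restricts automatically since the subalgebra is graded and connected). As all three operations are built out of shifts, standardizations and convex trees, it suffices to show that each of these constructions sends sorted trees to sorted trees. For shifts and standardizations this is immediate: both act by an order-preserving relabelling of the nodes, so they preserve at once the increasing condition along the root paths and the left-to-right ordering of the labels of the children of each node.

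The crux, and the only place where the argument genuinely departs from Proposition \ref{016}, is the behaviour of convex trees. For the increasing condition alone contraction is harmless, since deleting an interior node only shortcuts a root path $\cdots<\ell(p)<\ell(v)<\ell(d)<\cdots$ to $\cdots<\ell(p)<\ell(d)<\cdots$, which stays increasing. The sorted condition, however, is sensitive to contraction, because promoting the children of a deleted node to its parent can a priori disorder siblings; here one must exploit that only the interval convex trees $t_{[i,j]}$ occur. I would handle the two extreme cases separately. For a suffix $t_{[k+1,n]}$ we delete an initial segment of the post-order; since every node is preceded in post-order by all of its descendants, this deleted set is closed under passage to descendants, so $t_{[k+1,n]}$ is obtained from $t$ by erasing entire subtrees. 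No contraction takes place, the surviving siblings keep their relative order and labels, and sortedness is preserved. For a prefix $t_{[1,k]}$ we delete a final segment of the post-order, an ancestor-closed set; dually the kept nodes form a descendant-closed set, that is, a disjoint union of complete subtrees, and since the deleted nodes are ancestor-closed each such subtree attaches directly to the root of $t_{[1,k]}$.

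The heart of the matter is that these hanging subtrees appear in label-increasing order from left to right, and I expect this prefix case to be the main obstacle, precisely because it is where contraction really happens. I would argue by contradiction on a pair of them, with roots $u$ to the left of $v$, taking their nearest common ancestor $w$ in $t$ and the children $p,q$ of $w$ on the paths toward $u,v$, so that $p$ is left of $q$ and $\ell(p)<\ell(q)$ by sortedness of $t$. If $u$ were a proper descendant of $p$, then $\mathrm{parent}(u)$ would be a deleted node lying in the subtree of $p$, hence of post-order index smaller than that of the kept node $v$ in the subtree of $q$; but deleted nodes have index $>k$ while kept nodes have index $\le k$, a contradiction. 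Hence $u=p$, so $\ell(u)=\ell(p)<\ell(q)\le\ell(v)$, as required. A general interval factors as $t_{[i,j]}=(t_{[i,n]})_{[1,\,j-i+1]}$, a prefix of a suffix, so sortedness survives for every convex tree, and applying an order-preserving standardization afterwards changes nothing.

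With this convex-tree lemma in hand the three closures become routine. For $/$ one has $t/w=t\cdot w[m]$ with $m=|t|$: merging the roots places the root-children of $t$ (labels $\le m$) to the left of those of $w[m]$ (labels $>m$), and as both blocks are sorted and every label of $t$ lies below every label of $w[m]$, the concatenated list of root-children is increasing while the rest of the tree is untouched. For $\star$ one has $t\star w=t\tprod w[m]$, a sum of hash products gluing the interval blocks of a partition of $w[m]$ as rightmost initial children at distinct nodes $f(u_i)$ of $t$, the map $f$ being injective. Each block is a convex tree of the sorted tree $w[m]$, hence sorted, and its root carries a label exceeding $m$ and so exceeding every label of $t$; thus gluing it as the rightmost child respects the sibling order at $f(u_i)$ and keeps the junction path increasing. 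Finally the tensor factors of $\Delta_{\s}(t)=\sum_{k}\s(t_{[1,k]})\otimes\s(t_{[k+1,n]})$ are standardized prefix and suffix convex trees, already shown to be sorted. Hence $\field[SI[\infty]]$ is closed under $\star$, $/$ and $\Delta_{\s}$, which proves the proposition.
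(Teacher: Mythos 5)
Your proof is correct and follows the same strategy the paper intends: the paper's proof of this proposition is literally the one line ``by proceeding analogously as in Proposition \ref{016}'', i.e.\ check that sorted trees are closed under shifts, standardization and convex trees, hence under $\star$, $/$ and $\Delta_{\s}$. The real value of your write-up is the convex-tree lemma, which is precisely where the analogy with Proposition \ref{016} is not automatic: for increasing trees any node deletion preserves increasing root paths, whereas for sorted trees contraction can disorder siblings, and your prefix/suffix analysis (the kept set of a prefix is descendant-closed, the nearest-common-ancestor argument showing the hanging subtrees attach to the root in increasing label order, and the factorization $t_{[i,j]}=(t_{[i,n]})_{[1,j-i+1]}$ for general intervals) supplies exactly the step the paper's appeal to analogy glosses over. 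I find no gaps.
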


Another Hopf algebra was constructed in \cite{GrLa89} by using a similar family of trees called heap ordered trees.

\subsection{Primitive elements}

Theorem \ref{026} implies that the 2-associative Hopf algebra $\field[\T[\infty]]$ and their sub algebras are isomorphic, as infinitesimal bialgebras, to the tensor algebra on the vector space spanned by their primitive elements. In particular, they are cofree among connected coalgebras. The primitive part is determined by the action of the linear operator $e=\sum_{n\geq0}(-1)^n/^n\circ\,\bar{\Delta}^{(n)}$ on their irreducible trees respect to the product $/$. For instance, the following trees form a basis for the primitive sorted trees of degree $3$:\begin{figure}[H]$\figureteneig{#1}$\centering\caption{Basis for primitive sorted trees of degree $3$.}\end{figure}

We can determine the dimensions of the components of their primitive parts, or equivalently, the number of irreducible trees in each degree, as follows: For a connected graded infinitesimal bialgebra $H=\bigoplus_{n\geq0}H_n$ with $\Prim(H)=\bigoplus_{n\geq0}P_n$ we have the \emph{generating functions} $f(H,x)=\sum_{n\geq0}a_nx^n$ and $g(H,x)=\sum_{n\geq0}b_nx^n$ where $a_n=\dim(H_n)$ and $b_n=\dim(P_n)$ for all $n\geq0$. By \cite[Section 6.3]{Le72}, the series $f$ and $g$ are related through the equation $f(H,x)=(1-g(H,x))^{-1}$. Since $a_0=1$ and $b_0=0$ we obtain the following recursive formula\begin{equation}\label{018}b_n=a_n-\sum_{k=1}^{n-1}a_kb_{n-k}.\end{equation}

The following is a consequence of Equation \ref{018}.

\begin{pro}
We have:
\begin{enumerate}
\item[(1)]$g(\field[T[\infty]],x)=x+3x^2+23x^3+271x^4+4251x^5+82967x^6+\cdots$
\item[(2)]$g(\field[I[\infty]],x)=x+2x^2+10x^3+74x^4+706x^5+8162x^5+110410x^6+\cdots$
\item[(3)]$g(\field[SI[\infty]],x)=x+x^2+3x^3+13x^4+71x^5+461x^6+3447x^7+\cdots$
\end{enumerate}
\end{pro}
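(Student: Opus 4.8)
The plan is to apply the recursion of Equation \ref{018} directly to each of the three algebras, so that the whole argument reduces to identifying the dimension sequence $a_n=\dim(H_n)$ and then unwinding the recursion term by term. The key observation is that for each family the trees of a fixed degree form a homogeneous basis, so $a_n$ is precisely the number of such trees, a count already recorded when the family was introduced: $a_n=n!\,c_n$ for the $n$--trees spanning $\field[\T[\infty]]$, $a_n=(2n-1)!!$ for the increasing trees spanning $\field[I[\infty]]$, and $a_n=n!$ for the sorted trees spanning $\field[SI[\infty]]$.

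First I would note that Equation \ref{018} applies to all three cases. By Propositions \ref{011}, \ref{016} and \ref{024}, each space is a connected graded infinitesimal bialgebra, so by Theorem \ref{026} it is isomorphic, as a graded vector space, to $T(\Prim(H))$. Hence the ordinary generating functions $f(H,x)=\sum_{n\geq0}a_nx^n$ and $g(H,x)=\sum_{n\geq0}b_nx^n$ obey $f(H,x)=(1-g(H,x))^{-1}$, and since $a_0=1$ and $b_0=0$ this is exactly the recursion $b_n=a_n-\sum_{k=1}^{n-1}a_kb_{n-k}$. Thus nothing beyond Equation \ref{018} is required; the three series differ only through the input sequence $a_n$.

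Next I would substitute each dimension sequence and iterate. For $\field[\T[\infty]]$ one has $a_1=1,\ a_2=4,\ a_3=30,\ a_4=336,\dots$, which gives $b_1=1$, then $b_2=4-1=3$, then $b_3=30-(3+4)=23$, then $b_4=336-(23+12+30)=271$, and so on, matching item (1). The computations for $(2n-1)!!$ and for $n!$ are identical in form: from $a_1=1,\ a_2=3,\ a_3=15,\dots$ one obtains $b_1=1,\ b_2=2,\ b_3=10,\ b_4=74,\dots$ as in (2), and from $a_1=1,\ a_2=2,\ a_3=6,\dots$ one obtains $b_1=1,\ b_2=1,\ b_3=3,\ b_4=13,\dots$ as in (3).

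There is essentially no conceptual obstacle here. The only genuine point of care is the bookkeeping in the convolution sum $\sum_{k=1}^{n-1}a_kb_{n-k}$, where each previously computed $b_{n-k}$ must be paired with the correct $a_k$; a single sign or index slip propagates through all higher coefficients. The one substantive input, namely that $n!\,c_n$, $(2n-1)!!$ and $n!$ are the dimensions of the homogeneous components, was already established when the respective tree families were defined, so it remains only to feed these into Equation \ref{018} and read off the listed coefficients.
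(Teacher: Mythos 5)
Your proposal is correct and is exactly the paper's argument: the paper gives no further proof beyond noting that the series follow from Equation \ref{018}, i.e.\ from feeding the dimension sequences $n!\,c_n$, $(2n-1)!!$ and $n!$ into the recursion $b_n=a_n-\sum_{k=1}^{n-1}a_kb_{n-k}$, which is precisely what you do, and your computed coefficients all match. (Incidentally, your computation confirms that the exponents in item (2) of the statement contain a typo: the terms $8162x^5$ and $110410x^6$ should read $8162x^6$ and $110410x^7$.)
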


\section{Planar trees and permutations}\label{032}

For a finite set $X$ we will denote by $X\sqcup X$ the multiset in which each element of $X$ appears twice.

In \cite{Ja08}, Janson showed an explicit bijection between the set of increasing trees and the so called Stirling permutations which are obtained through the depth first walk traversing of a tree. In this section, we use this technique to code $n$--trees as permutations of the multiset $[n]\sqcup[n]$.

In this work, a permutation of a finite multiset $X$ is a word in which each letter of $X$ appears as times as its multiplicity. For a finite set $X$, a \emph{$2$--permutation} of $X$ is a permutation of the multiset $X\sqcup X$. We will denote by $\Sym_{n,2}$ the set of all $2$--permutations of $[n]$. For every word $u$ formed by positive integers and every nonnegative integer $m$, we will denote by $u[m]$ the \emph{$m$--shift} of $u$ obtained by adding $m$ to each letter of it. For instance, the $3$--shift of the $2$--permutation ${\tt213132}$ is ${\tt435354}$. 

Given a word $u$ formed by letters of some subset of $\N$, we denote by $\s(u)$ the \emph{standardization} of $u$ defined by replacing the occurrences of the $k$th smallest letter of $u$ by the number $k$. For instance $\s({\tt299552663883})={\tt166331442552}$.

\subsection{Treed permutations}

A \emph{treed permutation} of size $n$ is a $2$--permutation of $[n]$ in which the elements placed between the two occurrences of a letter form a $2$-permutation as well, that is, each one of these elements appear twice. For instance, the word $u={\tt211442665335}$ is a treed permutation of size $4$. We will denote by $\Sym_{n,2}^T$ the set of all treed permutations of size $n$, and set\[\Sym_{\infty,2}^T=\bigsqcup_{n\geq0}\Sym_{n,2}^T.\]Note that every treed permutation can be written as a concatenation of words of the form $kuk$, called \emph{parts}, where $k\in[n]$ and $\s(u)$ is a treed permutation. For instance, $u$ above is the concatenation of the words ${\tt211442}$, ${\tt66}$ and ${\tt5335}$.

The \emph{depth first walk} of an $n$--tree $t$, denoted by $\varepsilon(t)$, is the $2$--permutation obtained by clockwise traversing the tree from root to root across its border, so that each label is read twice. This traversing method is also known as the \emph{Euler tour} of the tree and the word obtained is called its \emph{Euler tour representation}. For instance, the treed permutation ${\tt133156446225}$ is obtained by traversing the following $6$--tree:\begin{figure}[H]$\figuretennin{#1}$\centering\caption{Euler tour of a tree.}\end{figure}Note that if $t$ is an $\N$--labelled tree such that $t=t_1\cdots t_k$ for some $t_1,\ldots,t_k\in T(\N)$, then $\varepsilon(t)=\varepsilon(t_1)\cdots\varepsilon(t_k)$.

For an $\N$-labelled tree $t$ and an integer $k$, we will denote by $t\circ k$ the tree obtained by identifying the root of $t$ with $\ktreeone$.

\begin{pro}\label{021}
For every positive integer $n$, $\T[n]$ and $\Sym_{n,2}^T$ are bijective via the Euler tour representation. In consequence, the number of treed permutations of size $n$ is $n!c_n$.
\end{pro}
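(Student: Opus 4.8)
The plan is to show that the Euler tour map $\varepsilon$ is a bijection by strong induction on $n$, matching the recursive decomposition of $n$--trees with the factorization of treed permutations into parts; the count then follows at once from $|\T[n]|=n!c_n$. First I would verify that $\varepsilon$ is well defined, that is, $\varepsilon(t)\in\Sym_{n,2}^T$ for every $t\in\T[n]$. The key observation is that the two occurrences of a label $k$ in $\varepsilon(t)$ are exactly the instants at which the traversal enters and leaves the node labelled $k$, so the letters read strictly between them form the Euler tour of the subtree rooted at $k$; in that block every label of the subtree is read twice, which is precisely the defining condition of a treed permutation.

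Next I would record the multiplicative behaviour of $\varepsilon$ that drives the induction. From the identity $\varepsilon(t_1\cdots t_k)=\varepsilon(t_1)\cdots\varepsilon(t_k)$ noted before the statement, together with the immediate relation $\varepsilon(s\circ k)=k\,\varepsilon(s)\,k$ coming from the definition of $\circ$, one gets the following master identity: if $t$ is a nontrivial $n$--tree whose root has children labelled $k_1,\dots,k_m$ from left to right and $s_i$ is the tree hanging below $k_i$, then $t=(s_1\circ k_1)\cdots(s_m\circ k_m)$ and hence $\varepsilon(t)=(k_1\,\varepsilon(s_1)\,k_1)\cdots(k_m\,\varepsilon(s_m)\,k_m)$. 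On the permutation side, a treed permutation factors uniquely into parts $k_iu_ik_i$: the closing letter of each part is forced to be the second occurrence of its opening letter, and the treed condition guarantees that the intervening block is balanced, so the next part starts immediately afterwards. These two decompositions are what I would put in correspondence.

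For the inductive step I would use that $\varepsilon$ is equivariant under order preserving relabellings of the non root nodes; in particular $\s(\varepsilon(t))=\varepsilon(\s(t))$. Then I would define the inverse $\Phi\colon\Sym_{n,2}^T\to\T[n]$ by factoring $\pi$ into its parts $k_iu_ik_i$, noting that the letters of $u_i$ form a set $A_i\subseteq[n]$ of size $|u_i|/2<n$, applying the inductive inverse to the treed permutation $\s(u_i)$ to obtain a tree, relabelling its nodes by $A_i$ through the unique increasing bijection to recover the $\N$--labelled tree $s_i$, and finally setting $\Phi(\pi)=(s_1\circ k_1)\cdots(s_m\circ k_m)$. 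The master identity gives $\varepsilon(\Phi(\pi))=\pi$, while $\Phi(\varepsilon(t))=t$ follows from the uniqueness of the part factorization and of the decomposition of a tree into its root--child subtrees.

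I expect the main obstacle to be the label bookkeeping rather than the combinatorial skeleton: one must check that $\Phi$ actually lands in $\T[n]$, i.e. that the labels produced are distinct and exhaust $[n]$ (this uses that the sets $A_1,\dots,A_m$ together with $\{k_1,\dots,k_m\}$ partition $[n]$, which is forced by $\pi$ being a $2$--permutation), and that standardization and relabelling are mutually inverse on each part, which is exactly where the relabelling equivariance is used. Granting the bijection, the final assertion is immediate: $|\Sym_{n,2}^T|=|\T[n]|=n!c_n$.
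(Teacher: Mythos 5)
Your proposal is correct and follows essentially the same route as the paper: induction on $n$, matching the unique factorization of a treed permutation into parts $k_iu_ik_i$ with the decomposition of an $n$--tree into the subtrees hanging from its root via $\circ$ and the dot product, with standardization handling the relabelling. The only cosmetic difference is that you package injectivity and surjectivity as a single two-sided inverse $\Phi$ (and you are somewhat more explicit about the label bookkeeping than the paper is), but the underlying argument is the same.
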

\begin{proof}
By definition, it is clear that, for every $n$-tree $t$, the word $\varepsilon(t)$ is a uniquely defined treed permutation of size $n$. Let $u$ be a treed permutation of size $n$. We will show, by induction on $n$, that there is an $n$--tree $t$ such that $\varepsilon(t)=u$. If $n=1$, then $u={\tt11}$ which is obtained as the Euler tour representation of the unique $1$--tree. For $n>1$, we assume every treed permutation of size smaller than $n$ can be obtained as the Euler representation of a tree in $\T[\infty]$. Let $k_1u_1k_1,\ldots,k_ru_rk_r$ be the parts of $u$. If $r=1$, inductive hypothesis implies that there is an $\N$-labelled tree $t_1$ such that $\varepsilon(t_0)=u_1$. Hence $\varepsilon(t)=u$ where $t=t_0\circ k_1$. If $r>1$, inductive hypothesis implies that there are irreducible $\N$-labelled trees $t_1,\ldots,t_r$ such that $\varepsilon(t_i)=k_iu_ik_i$ for all $i\in[r]$. Hence $\varepsilon(t)=u$ where $t=t_1\cdots t_r$. Therefore, the restriction of $\varepsilon$ from $\T[n]$ to $\Sym_{n,2}^T$ is onto. We will show now, by induction on $n$, that this restriction is injective. If $n=1$ it is obvious. For $n>1$, we assume the claim is true for values smaller than $n$. Let $t,w$ be two $n$--trees with $t_1\cdots t_p$ and $w_1\cdots w_q$ their irreducible decompositions, and such that $\varepsilon(t)=\varepsilon(w)$. Note that the roots of $t$ and $w$ have the same number of children because $\varepsilon(t)$ and $\varepsilon(w)$ must have the same number of parts, so $p=q$. If $p=1$, then $\varepsilon(t)=kuk=\varepsilon(w)$ for some $k\in\N$. Hence, there are $\N$-labelled trees $t',w'$ satisfying $t=t'\circ k$ and $w=w'\circ k$ such that $\varepsilon(\s(t'))=\varepsilon(\s(w'))$. By inductive hypothesis, we have $t'=w'$ which implies that $t=w$. If $p>1$, inductive hypothesis implies that $t_i=w_i$ because $\varepsilon(\s(t_i))=\varepsilon(\s(w_i))$ for all $i\in[p]$. Therefore $t=w$.
\end{proof}

Proposition above and Theorem \ref{020} imply that $\field[\T[\infty]]$ can be released as a Hopf algebra of treed permutations, so that that $\varepsilon:\T[\infty]\to\Sym_{n,2}^T$ extends to an isomorphism from $\field[\T[\infty]]$ onto $\field[\Sym_{n,2}^T]$. In what follows of this subsection we will characterize the product and coproduct of $\field[\Sym_{n,2}^T]$ through this bijection.

For a word $u$ and a set $A$, we denote by $u_A$ the word obtained by removing the letters that not belong to $A$.

Given a subset $X$ of non negative integers and a $2$--permutation $u$ of it, we will denote by $X^u$ the set $X$ totally ordered respect to the positions of the second occurrences of the letters of $X$ in $u$. A $k$--\emph{partition} of a $2$--permutation $u$ is an ordered collection $P=(u_1,\ldots,u_k)$ of words called \emph{blocks} of $P$, where $u_i=u_{I_i}$ for some $k$--partition $(I_1,\ldots,I_k)$ of $X^u$. The set of all $k$--partitions of $u$ is denoted by $\PP_k(u)$, and set\[\PP(u)=\PP_1(u)\sqcup\cdots\sqcup\PP_n(u).\]For instance, the collection $P=({\tt 11},{\tt2552},{\tt4334})$ is a $3$--partition of $u={\tt 21{\red1}5{\red52}43{\red34}}$. We will denote by $X^u_0$ the ordered set obtained by adding $0$ as the maximal element of $X^u$.

Let $u,w$ be two treed permutations of sizes $m$ and $n$ respectively, let $P=\{w_1,\ldots,w_k\}$ be a $k$--partition of $w[m]$, and let $f:P\to[m]^u_0$ be an order preserving map. The \emph{hash product} of $u$ with $w$, denoted by $u\hash{P,f}w$, is the treed permutation obtained by inserting each block $w_i$ in $u$ as follows: if $f(w_i)\neq0$ we put $w_i$ just before the second occurrence of $f(w_i)$, otherwise we put $w_k$ at the end of $u$. Note that $u\hash{P,f}w$ is a treed permutation of size $m+n$. To short, if $P,f$ are clear in the context, we will denote $u\hash{P,f}w$ by $u\hash{}w$ instead.

Since order preserving maps are injective, such a map exists only if $k\leq m+1$. We will denote by $I(P,u)$ the set of all order preserving maps from $P$ to $[m]_0^u$.

\begin{exm}
Let $u={\tt42{\red2}31{\red134}}$ and $w={\tt1{\blue1}23{\blue3}54{\blue452}}$ be treed permutations of sizes $4$ and $5$. Let $P=\{w_1,w_2,w_3\}$ be a $3$--partition of $w[4]={\tt5{\blue5}67{\blue7}98{\blue896}}$, where $w_1={\tt5577}$, $w_2={\tt9889}$ and $w_3={\tt66}$, and let $f:P\to[4]_0^u$ defined by $f(w_1)=2$, $f(w_2)=3$ and $f(w_3)=0$. Hence $u\hash{}w={\tt42{\red5577}2311{\red9889}34{\red66}}$.
\end{exm}

Given two treed permutations $u,w$, the \emph{product} of $u$ with $w$ is defined as follows\[u*w:=\sum_{P\in P(w[m])}\sum_{f\in I(P,u)}u\hash{P,f}w=\sum_{P,f}u\hash{P,f}w.\]

\begin{exm}
Below the product of two treed permutations.\[\begin{array}{rcl}
{\tt2112}*{\tt\red332112}&=&{\tt2112}{\tt\red554334}+{\tt211{\tt\red554334}2}+{\tt21{\tt\red554334}12}+{\tt21{\tt\red5533}12{\tt\red44}}+{\tt21{\tt\red5533}1{\tt\red44}2}\\[1.5mm]&+&{\tt211{\tt\red5533}2{\tt\red44}}+{\tt21{\tt\red55}12{\tt\red4334}}+{\tt21{\tt\red55}1{\tt\red4334}2}+{\tt211{\tt\red55}2{\tt\red4334}}+{\tt21{\tt\red55}1{\tt\red33}2{\tt\red44}}.\end{array}\]
\end{exm}

The coalgebra structure of $\field[\Sym_{\infty,2}^T]$ is given by the following coproduct\[\Delta(u)=\sum_{k=0}^n\s(u_{[x_1,x_i]})\otimes\s(u_{[x_{i+1},x_n]})\]for all $u\in\Sym_{n,2}^T$ with $X^u=\{x_1<\cdots<x_n\}$.

\begin{exm}
Below the coproduct of a treed permutation of size $5$.\[\begin{array}{rcl}
\Delta({\tt215{\red5}43{\red3412}})&=&\emptyset\otimes{\tt2155433412}+{\tt11}\otimes{\tt21433412}+{\tt2211}\otimes{\tt213312}\\[1.5mm]
&+&{\tt332211}\otimes{\tt2112}+{\tt14432231}\otimes{\tt11}+{\tt2155433412}\otimes\emptyset.
\end{array}\]
\end{exm}

\subsection{Stirling permutations}

An \emph{Stirling permutation} of size $n$ is a $2$--permutation of $[n]$ in which $h>k$ for all element $h$ placed between the two occurrences of a letter $k$, see \cite{GeSt78} for details. Note that every Stirling permutation is a treed permutation as well. For instance, the word ${\tt13344122}$ is an Stirling permutation of size $4$. We will denote by $\Sym_{n,2}^S$ the set of all Stirling permutations of size $n$, and set\[\Sym_{\infty,2}^S=\bigsqcup_{n\geq0}\Sym_{n,2}.\]Every Stirling permutation of size $n$ can be obtained by intercalating ${\tt nn}$ between the letters of a Stirling permutation of size $n-1$. For instance, the Stirling permutations of size $2$ are obtained by inserting ${\red22}$ in ${\tt11}$, that is ${\tt11{\red22}}$, ${\tt1{\red22}1}$ and ${\tt{\red22}11}$. Hence $|\Sym_{n,2}^S|=(2n-1)!!$

As was showed by Janson \cite{Ja08}, the restriction of $\varepsilon$ defines a bijection between $I[n]$ and $\Sym_{n,2}^S$ for all positive integer $n$. This bijection and Proposition \ref{016} imply that $\field[I[\infty]]$ can be released as a Hopf subalgebra of $\field[\T[\infty]]$ spanned by Stirling permutations, so that the restriction of $\varepsilon$ is an isomorphism from $\field[I[\infty]]$ onto $\field[\Sym_{\infty,2}^S]$. 

Note that the standardizations of the blocks of the $k$--partitions of a Stirling permutation are Stirling permutations as well. This and the fact that the shift operations appear in the definition of the hash product of treed permutations, implies that the hash product of two Stirling permutations is again an Stirling permutation. Hence, the product and coproduct in $\field[\Sym_{\infty,2}^S]$ is described in the same way as in $\field[\Sym_{\infty,2}^T]$.

\subsection{Classic permutations}

In this subsection we give a bijection between sorted trees and permutations.  Through this map we may consider of Hopf algebra of sorted trees described in Subsection \ref{022} as a realisation of the Malvenuto--Reutenauer Hopf algebra of permutations \cite{MaRe95,AgSo02} via trees. In \cite{GrLa09}, it is used another bijection to represent the Hopf algebra of heap ordered trees in terms of permutations.

The traverse of the labels of an in increasing tree, via the depth-first post order, induces a unique permutation of size the degree of the tree. For instance, the tree below induces the permutation ${\tt2413}$:\begin{figure}[H]$\figuretwenty{#1}$\centering\caption{Sorted tree inducing ${\tt2413}$.}\end{figure}

\begin{pro}\label{023}
The depth-first post traverse defines an explicit bijection between $SI[n]$ and $\Sym_n$ for all integer $n>0$.
\end{pro}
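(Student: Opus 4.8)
The plan is to reduce to a counting argument and then exhibit an explicit inverse. Since the labels of a sorted tree of degree $n$ are exactly $1,\ldots,n$ without repetition, reading them in depth-first post order produces a word $\phi(t)\in\Sym_n$, so the map $\phi\colon SI[n]\to\Sym_n$ is well defined. As both $SI[n]$ (shown above to have $n!$ elements) and $\Sym_n$ have cardinality $n!$, it suffices to prove that $\phi$ is injective; I will do this by describing a one-sided inverse, which simultaneously furnishes the \emph{explicit} bijection asserted in the statement.

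First I would record the block decomposition of $\phi(t)$. If the root of $t$ has children $c_1<\cdots<c_d$ (increasing from left to right, by the sorted condition), then post order traverses the subtrees rooted at $c_1,\ldots,c_d$ in this order before the root, so $\phi(t)=W_1\cdots W_d$, where $W_i$ is the post-order word of the subtree $t_{c_i}$ rooted at $c_i$. Because $t$ is increasing, $c_i$ is the smallest label occurring in $t_{c_i}$, and since $c_i$ is visited last within its subtree, $W_i$ ends with its own minimum $c_i$. I would then prove the key claim: the block-enders $c_1,\ldots,c_d$ are precisely the \emph{right-to-left minima} of $\phi(t)$, i.e.\ those positions $j$ with $\sigma_j<\sigma_i$ for all $i>j$. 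Indeed, each $c_i$ is smaller than every later letter (the letters after it are exactly $W_{i+1}\cdots W_d$, whose minimum is $c_{i+1}>c_i$), while any non-final letter of a block $W_i$ is larger than the $c_i$ that follows it inside $W_i$; hence the right-to-left minima recover both the number $d$ of children of the root and the block boundaries intrinsically from the word.

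This yields the inverse $\psi\colon\Sym_n\to SI[n]$ by induction on $n$: for $\sigma\in\Sym_n$ with $n>0$, cut $\sigma$ at its right-to-left minima into blocks $W_1,\ldots,W_d$ (the base case $n=1$ being the word ${\tt1}$ and the unique $1$--tree); delete the final letter $c_i$ of each $W_i$, apply $\psi$ to the standardization of the resulting shorter word to obtain the subtree hanging below a node labelled $c_i$; then attach these subtrees, left to right in the order $c_1<\cdots<c_d$, to a common root. Since the $c_i$ are increasing and each $c_i$ is the minimum of its block, the reconstructed tree has increasing root-to-leaf paths and left-to-right increasing siblings at the root, while the inductive hypothesis supplies these properties below; thus $\psi(\sigma)\in SI[n]$. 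A direct check that the post-order reading of $\psi(\sigma)$ returns $\sigma$, together with the forced nature of the decomposition, gives $\phi\circ\psi=\id$ and $\psi\circ\phi=\id$.

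The main obstacle I anticipate is the key claim identifying the right-to-left minima with the root's children: it is exactly here that both defining features of a sorted tree must be used, namely the increasing property to guarantee that each subtree's root is its minimal label (so that block-enders are block-minima and all later letters are strictly larger), and the sorted property to guarantee that these minima appear in increasing order, matching the left-to-right order of the blocks. Handling the interplay with standardization $\s$ in the inductive step is routine given the identity $\s(\s(w)_A)=\s(w_A)$ already used above, but verifying that the reconstructed object genuinely satisfies both conditions (rather than merely having the correct underlying shape) is the delicate point; once it is settled, injectivity together with the equality of cardinalities closes the argument.
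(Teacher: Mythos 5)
Your proof is correct, but it takes a genuinely different route from the paper's. The paper also begins from the count $|SI[n]|=n!=|\Sym_n|$, but then proves \emph{surjectivity} by induction on $n$: given $u=u_1\cdots u_n$ with $u_k=n$, it applies the inductive hypothesis to the word obtained by deleting $n$ to get a sorted tree $w$ of degree $n-1$, and reinserts $n$ as a new rightmost child --- of the root if $k=n$, and of the node labelled $u_{k+1}$ otherwise --- which in post order places $n$ immediately before $u_{k+1}$ and preserves both the increasing and the sorted conditions automatically, since $n$ is the maximal label. You instead prove injectivity (indeed bijectivity outright) by building an explicit inverse from the decomposition of $\phi(t)$ at its right-to-left minima, which you correctly identify with the children of the root; your recursion runs on the depth of the tree rather than on the size of the label set. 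Your key claim (block-enders coincide with right-to-left minima) is exactly where both defining properties of sorted trees enter, and it holds; note also that for an \emph{arbitrary} $\sigma\in\Sym_n$ each right-to-left minimum is automatically the strict minimum of its block, so the reconstruction is well defined on all of $\Sym_n$. The only loose end is that after standardizing a block and applying $\psi$ you must relabel the resulting subtree back to the original letters via the inverse of the standardization before grafting it below the node $c_i$, and observe that this order-preserving relabelling preserves the increasing and sorted conditions --- routine, but worth a sentence. The paper's argument is shorter because ``attach a maximal label as a rightmost child'' trivially preserves sortedness; yours buys more: it isolates the governing statistic (right-to-left minima correspond to the root's children), gives a closed description of where to cut the word, and would establish bijectivity even without the prior enumeration $|SI[n]|=n!$.
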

\begin{proof}
Since $|SI[n]|=n!=|\Sym_n|$ it is enough to show that $\phi:S_n\to\Sym_n$, defined via the depth-first post order, is surjective, that is, for a permutation $u=u_1\cdots u_n$ of $[n]$ there is a sorted tree $t$ such that $\phi(t)=u$. We will proceed by induction on $n$. If $n=1$ the claim is obvious. If $n>1$, we assume the result is true for $n-1$. Inductive hypothesis implies that $\phi(w)=u_{[n-1]}=u_1\cdots u_{k-1}u_{k+1}\cdots u_n$ for some sorted tree $w\in SI[n-1]$, where $k$ is the unique index satisfying $u_k=n$. We define $t$ as the sorted tree of degree $n$ obtained by adding an $n$ labelled rightmost child either to the root of $w$ if $k=n$ or to the node of $w$ labelled by $u_{k+1}$ otherwise. Hence $\phi(t)=u$. Therefore $\phi$ is bijective.
\end{proof}

The result above defines an explicit bijection between the collections of sorted trees and classic permutations. A similar bijection was given in \cite{AgSo05} via the depth-first pre order traverse of trees.

Proposition \ref{023} and Proposition \ref{024} imply that $\field[SI[\infty]]$ can be released as a Hopf algebra of permutations. This structure coincides with the Hopf algebra of permutations introduced in \cite{MaRe95}. Indeed, for sorted trees $t,w$ with respective permutations $u$ and $v$, the traverse of a hash product $t\hash{P,f}w[m]$ where $m=|t|$ defines a permutation obtained by shuffling the letters of $u$ with a partition of $v[m]$ induced through of $P$ and $f$. Since there are $\binom{m+n}{m}$ hash products as above, then the product of $u$ with $v$ is exactly the product defined in \cite{MaRe95}. The equivalence of the coproduct is immediate from the bijection.

\begin{rem}[Signed permutations]
Other versions of the Malvenuto--Reutenauer Hopf algebra can be released via sorted trees. The Hopf algebra of signed permutations, constructed in \cite{FoFr17}, and its sub Hopf algebra of even--signed permutations can be obtained by considering sorted trees whose nodes are additionally labelled by two possible colors. For instance, the signed permutation $[-3,1,4,-5,2]$ can be represented by the following sorted tree:\begin{figure}[H]$\figuretweone{#1}$\centering\caption{Sorted double labelled tree.}\end{figure}
\end{rem}

\section{Binary trees}\label{033}

In this section, we show that $\field[\T_{\infty}]$, with Hopf algebra structure given in Section \ref{029}, can be regarded as a realization of the dual structure of the Loday--Ronco Hopf algebra of binary trees \cite{LoRo98}, which was studied in \cite{AgSo04}.

Recall that a \emph{binary tree} is a connected planar graph, with no cycles, in which each internal vertex is trivalent. We will denote by $\deg(x)$ the number of internal vertices of a binary tree. The set of binary trees with $n$ internal vertices is denoted by $Y_n$, and we set\[Y_{\infty}:=\bigsqcup_{n\geq0}Y_n.\]For instance:\begin{figure}[H]$\figuretwethr$\centering\caption{Binary trees with at most $2$ internal vertices.}\end{figure}

It was showed in \cite{LoRo98} that the vector space $\field[Y_{\infty}]$ can be given with a structure of graded Hopf algebra that is regarded as a Hopf subalgebra of $\field[\Sym_{\infty}]$.

Given a binary tree $x\in Y_n$, we numerate its $(n+1)$ leaves from $0$ to $n$. Note that given a leaf of $x$ there is a unique branch associated to this leaf. We denote by $x_{[i,j]}$ the subtree of $x$ placed between the branches $i$ and $j$. For instance:
\begin{figure}[H]$\figuretwefou$\centering\caption{Convex binary tree.}\end{figure}

For binary trees $x,y$, the \emph{over product} of $x$ with $y$, denoted by $x/y$, is the binary tree obtained by identifying the root $x$ with the leftmost leaf of $y$. Similarly, the \emph{under product} of $x$ with $y$, denoted by $x\backslash y$, is obtained by identifying the root of $y$ with the rightmost leaf of $x$. A binary tree $x$ is \emph{irreducible} respect to the under product if $x=y/\binarytreeone$ for some binary tree $y$. Note that every binary tree $x$ can be uniquely written as $x=x_1\backslash \cdots\backslash x_k$, where $x_1,\ldots x_k$ are $\backslash$-irreducible trees, with $k\geq1$.

In \cite{AgSo04}, the decomposition above is used to define a bijection $\varphi:\T_{\infty}\rightarrow Y_{\infty}$ in order to establish an explicit isomorphism between the non commutative Connes-Kreimer Hopf algebra of Foissy and the Loday-Ronco Hopf algebra of binary trees.

To describe the map $\varphi$, we recall that if $t\in\T_{\infty}$ is irreducible of degree greater than one, then it has the form $t=t'\circ\treeone{}$ for some $t'\in\T_{\infty}$. The map $\varphi$ is defined inductively as follows\[\varphi(t)=\left\{\begin{array}{ll}
|&\text{if }t=\treeroot,\\[1mm]
\binarytreeone&\text{if }t=\treeone{},\\[1mm]
\varphi(t')/\binarytreeone&\text{if }t\text{ is irreducible},\\[1mm]
\varphi(t_1)\backslash\cdots \backslash \varphi(t_k)&\text{if }t=t_1\cdots t_k.
\end{array}\right.\]For instance:\begin{figure}[H]$\figuretwefiv{#1}$\centering\caption{Binary tree of a tree.}\end{figure}

Consider now the Hopf algebra $(\field[\T_{\infty}],*,\Delta)$ introduced in the Section \ref{029}. We will show that the Hopf algebra structure induced on $\field[Y_{\infty}]$ through the map $\varphi$ coincides with the dual graded Hopf algebra of Loday--Ronco described as in \cite{AgSo04}.

If $t$ is a non trivial tree in $\T_{\infty}$, then the map $\varphi$ leads a correspondence between $N^*(t)$ and the internal vertices of the binary tree $\varphi(t)$. In this way, the order considered on $N^*(t)$ induces an order on the internal vertices of $\varphi(t)$. For instance:\begin{figure}[H]$\figuretwesix{#1}$\centering\caption{Traversing correspondence.}\end{figure}

This order on the internal vertices of a binary tree is known as \emph{depth first in order}, that is, we first traverse the left subtree of a node, the node itself and so the right subtree. We use such order to describe the Hopf algebra structure on $\field[Y_{\infty}]$. Let $t\in\T_{\infty}$ and let $x\in Y_{\infty}$ such that $\varphi(t)=x$. Recall that a convex tree $t_{[i,j]}$ of $t$ is determined by an interval of $N^*(t)$. The image of $t_{[i,j]}$ under $\varphi$ is given by the subtree of $x$ with internal vertices between $i$ and $j$, that $\varphi(t_{[i,j]})=x_{[i-1,j]}$. Hence, we can obtain partitions of $x$ via the partitions of $t$, more explicitly, a \emph{$k$-partition} of $x$ will be a collection of binary trees $x_{[0,n_1]},x_{[n_1,n_2]},\ldots,x_{[n_k,n]}$, where $0<n_1<\cdots<n_k<n$.

Under the description given above, we describe the Hopf algebra structure on $\field[Y_{\infty}]$.

For a binary tree $x\in Y_n$, its coproduct is\[\Delta(x)=\sum_{i=0}^nx_{[0,i]}\otimes x_{[i,n]}.\]For $x,y\in Y_{\infty}$ with $\deg(x)=m$, a \emph{hash product} $x\hash{}y$ is determined by a partition $P$ of $y$ and an order preserving map from $P$ to the (numbering of the) leaves of $x$, that is $f:P\to[m]_0$. So, $x\hash{}y$ is the binary tree obtained by gluing the blocks of $P$ on the leaves of $x$ respect to $f$. Hence, the \emph{product} of $x$ with $y$ is\[x*y=\sum x\hash{}y.\]

The tuple $(\field[\T_{\infty}],*_{op},\Delta)$ is precisely the Hopf algebra structure described in \cite{AgSo04}. This implies that the dual graded Hopf algebra of $\field[\T_{\infty}]$ is isomorphic to the Loday--Ronco Hopf algebra \cite{LoRo06}.

\subsubsection*{Acknowledgements}

This work is part of the research group GEMA Res.180/2019 VRI--UA. The first author was supported, in part, by the the grant Fondo de Apoyo a la Investigaci\'on DIUA179-2020.

\bibliographystyle{plain}
\bibliography{bibtex.bib}
%\bibliography{../../../Projects/bibtex}

\listoffigures

%\label{034}

$\,$\\[1cm]
Number of words in the document: 11271.

\end{document}